\newif\ifdebug
\def\sK{\mathcal{K}}
\newcommand{\SK}{\mathcal{K}}
\def\zk{\mathcal{Z}_{\mathcal{K}}}
\newcommand{\momang}{\mathcal{Z}_{\mathcal{K}}}
\newcommand{\Hom}{\mathrm{Hom}}
\def\st{\mathop\mathrm{st}}
\def\Ker{\mathop{\mathrm{Ker}}}
\def\F{\mathcal{F}}
\def\Max{\mathscr C}
\def\C{\mathbb C}
\def\R{\mathbb R}
\def\Z{\mathbb Z}
\def\g{\mathfrak g}
\def\h{\mathfrak h}
\def\r{\mathfrak r}
\def\ge{\geqslant}
\def\le{\leqslant}
\theoremstyle{plain}
\newtheorem{thm}{Theorem}[section]
\newtheorem{lemma}[thm]{Lemma}
\newtheorem{proposition}[thm]{Proposition}
\theoremstyle{definition}
\newtheorem{defn}[thm]{Definition} 
\newtheorem{rmk}[thm]{Remark}
\newtheorem{constr}[thm]{Construction}
\begin{document}
\title[Dolbeault cohomology of manifolds with torus action]{Dolbeault cohomology of complex manifolds with torus action}

\author{Roman Krutowski}
\address{Faculty of Mathematics, National Research University Higher School of Economics, Moscow, Russia}
\email{roman.krutovskiy@protonmail.com}

\author{Taras Panov}
\address{Department of Mathematics and Mechanics, Lomonosov Moscow
State University;
Faculty of Computer Science, National Research University Higher School of
Economics, Moscow, Russia; and
Institute for Information Transmission Problems, Russian Academy of Sciences, Moscow}
\email{tpanov@mech.math.msu.su}
\thanks{The first author was supported by the Laboratory of Mirror
Symmetry NRU HSE, by the Simons Foundation  and by the Foundation for the Advancement of Theoretical Physics and
Mathematics ``BASIS''}
\thanks{The research of the second author was carried out within the framework of the Basic Research Program at HSE University and funded by the Russian Academic Excellence Project ``5-100''.}

\begin{abstract}
We describe the basic Dolbeault cohomology algebra of the canonical foliation on a class of complex manifolds with a torus symmetry group. This class includes complex moment-angle manifolds, LVM- and LVMB-manifolds and, in most generality, complex manifolds
with a maximal holomorphic torus action. We also provide a dga model for the ordinary Dolbeault cohomology algebra. The Hodge decomposition for the basic Dolbeault cohomology is proved by reducing to the transversely K\"ahler (equivalently, polytopal) case using a foliated analogue of toric blow-up.
\end{abstract}

\subjclass[2010]{32J18, 32L05, 32M05, 32Q55, 37F75, 57R19, 57S12, 14M25}

\maketitle

\setcounter{section}{0}
\section{Introduction}
\emph{Complex moment-angle manifolds} $\zk$ are non-K\"ahler complex-analytic manifolds with a holomorphic action of the complex algebraic torus $(\C^\times)^m$~\cite{panov2012complex,p-u-v16}. The action of the compact tours $T^m\subset (\C^\times)^m$ on $\zk$ is \emph{maximal} in the sense that there exists $x\in\zk$ such that $m+\dim T^m_x=\dim\zk$, where $T^m_x$ is the stabiliser subgroup at~$x$. By a classification result of Ishida~\cite{ishida2013complex}, any complex manifold $M$ with a maximal torus action by holomorphic transformations can be obtained as the quotient $\zk/C$ by an appropriate freely acting closed subgroup $C\subset T^m$.

Complex moment-angle manifolds are a subclass of \emph{LVMB-manifolds}~\cite{meer00,bosi01}. The relationship between LVM-~\!, LVMB-manifolds and moment-angle-manifolds is described in~\cite{buchstaber2015toric,ba-za15,ishida2013complex}. In short, the underlying smooth manifold of an LVMB-manifold is either a moment-angle manifold $\zk$, or the quotient $\zk/S^1$ of a moment-angle manifold by the diagonal circle action. LVM-manifolds correspond to polytopal moment-angle manifolds~$\zk$.

A complex structure on a moment-angle manifold $\zk$ is defined in terms of a complete simplicial fan~$\Sigma$ with underlying simplicial complex~$\sK$. The manifold $\zk$ is equipped with a canonical holomorphic foliation $\mathcal F_{\mathfrak h}$ by the orbits of an action of a generally non-compact group, see the details in~\S\ref{subsecfol}. When the fan $\Sigma$ is rational, the foliation $\mathcal F_{\mathfrak h}$ becomes a holomorphic fibre bundle over the toric variety $V_\Sigma$ with fibres compact complex tori.

There is an important particular case when $\Sigma$ is the normal fan of a convex polytope. The corresponding moment-angle manifold $\zk$ is called \emph{polytopal}. In combinatorial terms, the underlying simplicial complex $\sK$ is a \emph{starshaped} sphere triangulation. A polytopal moment-angle manifold $\zk$ can be written as a nondegenerate intersection of Hermitian quadrics~\cite[Chapter~6]{buchstaber2015toric}. If $\Sigma$ is a nonsingular rational fan, then the corresponding toric variety $V_\Sigma$ is K\"ahler (which is equivalent to being  projective or symplectic) if and only if $\Sigma$ is polytopal. There is a foliated version of this fact which applies in the general situation: the foliation $\mathcal F_{\mathfrak h}$ is transversely K\"ahler if $\zk$ is polytopal~\cite[Proposition~4.4]{p-u-v16}. The converse is also true~\cite[Theorem~5.5]{ishida2018towards}, although a transversely K\"ahler form may exist on an open dense subset of $\zk$ under a milder condition on the fan~$\Sigma$, see~\cite[Theorem~4.6]{p-u-v16}. 

In the polytopal case, the transversely K\"ahler foliations $(\zk,\mathcal F_{\mathfrak h})$ can be viewed as irrational (or ``non-commutative'') analogues of symplectic toric manifolds and are studied by several groups of authors, including Battaglia and Prato~\cite{ba-pr01,ba-pr19}, Katzarkov, Lupercio, Meersseman and Verjovsky~\cite{k-l-m-v14}, Ratiu and Zung~\cite{ra-zu}.  In particular, several irrational versions of the Delzant correspondence between symplectic toric manifolds and their moment polytopes were obtained in these works.

Battaglia and Zaffran~\cite{ba-za15} considered basic cohomology of the canonical foliation $\mathcal F_{\mathfrak h}$ (in the context of LVMB-manifolds), computed the basic Betti numbers in the case when the associated fan $\Sigma$ is shellable, and proved that the basic cohomology ring $H^{*}_{\F_{\h}}(\momang)$ is generated by the classes of degree~2 when $\Sigma$ is polytopal. They conjectured that the basic cohomology ring has a description similar to the cohomology ring of a complete simplicial toric variety~\cite[\S5~(iv)]{ba-za15}, and that the basic Hodge numbers of $\F_{\h}$ are concentrated on the diagonal~\cite[\S5~(v)]{ba-za15}. The first conjecture is proved in~\cite{ikp2018basic}, while the question about the Hodge numbers is addressed here.

Since the foliation $\mathcal F_{\mathfrak h}$ is holomorphic, a natural question arises of whether its basic cohomology admits a Hodge decomposition. We prove this fact in Theorem~\ref{mainhodge} here, and also show that non-trivial basic Dolbeault cohomology groups $H^{p,q}_{\F_{\h}}(\momang)$ appear only on the diagonal $p=q$ of the Hodge diamond, as in the case of Dolbeault cohomology of a complete nonsingular toric variety. This gives a positive answer to the question of Battaglia and Zaffran mentioned above.  The basic Dolbeault cohomology algebra $H^{*,*}_{\F_{\h}}(\momang)$ is therefore fully described, see Theorem~\ref{DolbeaultM}.

To establish the Hodge decomposition for the basic cohomology of 
$(\zk,\mathcal F_{\mathfrak h})$ we introduce the notion of a \emph{Fujiki foliation}, a foliated version of a Fujiki class $\mathcal C$ manifold, see Section~\ref{fujikifol}. Namely, given a foliated moment-angle manifold $(\zk,\mathcal F_{\mathfrak h})$, we construct a holomorphic foliated surjection $\mathcal Z_{\mathcal K'} \to \zk$ from a transversely K\"{ahler} foliated moment-angle manifold~$\mathcal Z_{\mathcal K'}$. This is done by considering stellar subdivions of the original fan $\Sigma$ and defining an analogue of toric blow-up at the level of moment-agle manifolds, see~\S\ref{subm}. This construction allows us to reduce a general torus-invariant foliation to a trasverse K\"ahler (or transverse symplectic) one, and may be useful for other problems related to holomorphic foliations.

In the transverse K\"{a}hler (or polytopal) case the Dolbeault cohomology ring $H^{*,*}_{\F_{\h}}(\momang)$ was described in~\cite{ishida2018towards}. Using the representation of $\momang$ as an intersection of quadrics, Ishida showed in~\cite[Theorem~8.1]{ishida2018towards} that all generators of the Dolbeault cohomology ring are of type~$(1,1)$. Recently, Lin and Yang proved a more general result~\cite[Theorem~5.2]{li-ya} describing the Dolbeault cohomology of transverse K\"{a}hler foliations admiting a Hamiltonian torus action. 

In Section~\ref{max} we extend our description of the basic Dolbeault cohomology ring to general complex manifolds with holomorphic maximal torus action (which include LVM- and LVMB-manifolds). The main result here is Theorem~\ref{dolbgene}; its proof uses the notion of
\emph{transverse equivalence} and follows the approach developed in~\cite[\S5]{ikp2018basic}.

In the last section we use the basic Dolbeault cohomology ring to obtain a DGA model for the ordinary Dolbeault cohomology of a complex moment-angle manifold (Theorem~\ref{dgamodel}). This model extends the one obtained in~\cite[Theorem 5.4]{panov2012complex} in the case of a rational fan.

The authors thank Hiroaki Ishida for his valuable comments.

\section{Preliminaries: holomorphic foliations on complex manifolds.}

Here review basic facts about Riemannian and holomorphic foliations, following Molino~\cite{Molino} and  El Kacimi--Alaoui~\cite{kacimi1986decomposition}.

Let $\F$ be a holomorphic \emph{Hermitian} foliation of complex codimension $q$ on a compact complex manifold~$M$. We assume $\F$ to be \emph{homologically orientable}, that is, the top-degree basic cohomology group is nonzero: $H^{2q}_{\F}(M) \neq 0$. 
We denote by $T\F$ the complex vector bundle of tangent spaces to the leaves of~$\F$. We fix a transverse Hermitian metric, that is, a Hermitrian metric on the quotient complex vector bundle~$TM/T\F$. Using the associated transverse Riemannian metric we define the associated principal $SO(2q)$-bundle of transverse orthogonal oriented frames 
\[
  p_T\colon E_T(M, \F) \rightarrow M.
\]  
The foliation $\F$ lifts to a foliation $\F_T$ on $E_T(M, \F)$, see~\cite[Proposition~2.4]{Molino}. The foliation $\F_T$ is {\it transversely parallelisable}. That is, there are transverse vertical real vector fields $X_1,\ldots,X_{q(2q-1)}$ and transverse horizontal (with respect to the associated transverse Levi--Civita connection) vector fields $Y_1,\ldots,Y_{2q}$  which together form a basis of $T_zE_T(M, \F)/T_z\F_T$ at each point $z \in E_T(M, \F)$, see \cite[Section 5.1]{Molino}. 
By~\cite[Theorem 4.2]{Molino}, the closures of the leaves of the transversely parallelisable foliation $\F_T$ are the fibres of a locally trivial fibre bundle \[
  \pi_{{}_{Tb}}\colon E_T(M, \F) \rightarrow W_T,
\]  
called the \emph{basic fibration} over the \emph{basic manifold}~$W_T$. Set $s=\dim_\R W_T$. 

Let $\xi_1,\ldots,\xi_{q(2q-1)}$ be the  $1$-forms on $E_T(M, \F)$ satisfying $\xi_i(X_j)=\delta_{ij}$ and $\xi_i(Y_k)=0$ for $1 \le i, j \le q(2q-1)$ and $1 \le k \le 2q$. We put $\chi = \xi_1 \wedge \cdots \wedge \xi_{q(2q-1)}$.  Let $\Omega^*_{\F}(M)$ denote the algebra of real- or complex-valued basic forms with respect to the foliation~$\F$. We define a map 
\[
  S\colon \Omega^{2q}_{\F}(M) \rightarrow \Omega^{s}(W_T)
\]
as follows. Let $\lambda\in\Omega^*_{\F_T} (E_T(M, \F))$ be a form which restricts to a basic volume form on each fiber of $\pi_{{}_{Tb}}$ (see \cite[Proposition 3.2]{kacimi1986decomposition}). Given $\omega \in \Omega^{2q}_{\F}(M)$, we define $S(\omega) \in \Omega^{s}(W_T)$ as the form satisfying
\[
  p_T^*(\omega) \wedge \chi= \pi_{{}_{Tb}}^*S(\omega)\wedge\lambda.
\]

We say that a real basic form $\omega \in \Omega^{2q}_\F(M; \mathbb{R})$ is {\it positive} ({\it non-negative}) if $\omega(\rho)>0$ ($\omega(\rho) \ge 0$)  for any oriented $2q$-frame $\rho$ in $T_xM/T_x\F$. 
Note that if $\omega \in \Omega^{2q}_{\F}(M; \mathbb{R})$ is a positive (non-negative) basic form, then $S(\omega) \in \Omega^{s}(W_T ; \mathbb{R})$ is a positive (non-negative) form.

For a basic form $\omega \in \Omega^{2q}_{\F}(M, \mathbb{R})$, the {\it foliated integral} is defined by
\[
    \int_{M/\F}\omega := \int_{W_T}S(\omega).
\]

The following lemma is implicit in~\cite[Section~3]{kacimi1986decomposition}:
\begin{lemma}\label{positive}
Let $\omega \in \Omega^{2q}_{\F}(M, \mathbb{R})$ be a non-negative basic form which is positive on an open subset $U \subset M$. Then $\int_{M/\F}\omega > 0$. 
\end{lemma}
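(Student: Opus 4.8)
The plan is to reduce the statement to the corresponding fact on the basic manifold $W_T$ via the map $S$, where everything becomes an honest statement about differential forms on an ordinary manifold. First I would recall that, by the definition of the foliated integral, $\int_{M/\F}\omega = \int_{W_T} S(\omega)$, and that $S$ carries non-negative basic forms to non-negative forms and positive basic forms to positive forms (as noted just before the lemma). Since $\omega$ is non-negative everywhere, $S(\omega)$ is a non-negative top-degree form on $W_T$, so $\int_{W_T} S(\omega) \ge 0$ automatically; the whole point is to get strict positivity, which requires exhibiting a nonempty open subset of $W_T$ on which $S(\omega)$ is strictly positive.

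The key step is therefore to transport the open set $U \subset M$ on which $\omega$ is positive up to $E_T(M,\F)$ and then down to $W_T$. I would set $\widetilde U = p_T^{-1}(U) \subset E_T(M,\F)$, which is open and nonempty, and observe from the defining relation $p_T^*(\omega)\wedge\chi = \pi_{{}_{Tb}}^* S(\omega)\wedge\lambda$ that $S(\omega)$ is strictly positive at every point $w \in W_T$ lying under a point of $\widetilde U$: indeed, on $\widetilde U$ the left-hand side is a strictly positive multiple of the (nowhere-zero) form $\chi\wedge(\text{horizontal volume})$ in the appropriate sense, forcing the coefficient of $S(\omega)$ pulled back to $\widetilde U$ to be strictly positive there, and since $\lambda$ restricts to a volume form on the fibres this coefficient descends. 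Because the basic fibration $\pi_{{}_{Tb}}$ is a locally trivial fibre bundle, it is an open map, so $\pi_{{}_{Tb}}(\widetilde U)$ is a nonempty open subset of $W_T$ on which $S(\omega) > 0$.

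Finally, combining the two observations: $S(\omega)$ is a non-negative top-degree form on $W_T$ (using homological orientability / the fixed transverse orientation to make sense of "non-negative") which is strictly positive on the nonempty open set $\pi_{{}_{Tb}}(\widetilde U)$. Integrating a non-negative top form that is positive somewhere over the (compact, oriented) basic manifold $W_T$ yields a strictly positive number, hence $\int_{M/\F}\omega = \int_{W_T} S(\omega) > 0$.

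The main obstacle I anticipate is the bookkeeping in the middle step: one must check carefully that positivity of $\omega$ on an oriented transverse frame at $x \in U$ really does translate, through the wedge with $\chi$ and the division by $\lambda$, into pointwise positivity of the descended form $S(\omega)$ on $W_T$ — this uses that $\chi$ is a nowhere-vanishing form trivialising the vertical directions of $p_T$, that $Y_1,\dots,Y_{2q}$ project to a frame transverse to $\F$ downstairs, and that $\lambda$ is fibrewise a volume form for $\pi_{{}_{Tb}}$, so that none of these auxiliary choices can introduce a sign change or a zero. Once this local compatibility is pinned down, the global conclusion is immediate. This is essentially the content hinted at in \cite[Section~3]{kacimi1986decomposition}, so I would cite that for the routine verifications and spell out only the positivity propagation.
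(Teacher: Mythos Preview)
Your proposal is correct and follows essentially the same route as the paper: lift $U$ to $\widetilde U=p_T^{-1}(U)=E_T(U,\F|_U)$, push down by the open map $\pi_{{}_{Tb}}$ to get a nonempty open subset of $W_T$ on which $S(\omega)$ is positive, and conclude that $\int_{W_T}S(\omega)>0$. The paper's proof is terser---it simply asserts that $S(\omega)$ is non-negative and positive on $\pi_{{}_{Tb}}\bigl(E_T(U,\F|_U)\bigr)$, citing the remark before the lemma rather than spelling out the bookkeeping you describe---but the argument is the same.
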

\begin{proof}
Since $\pi_{{}_{Tb}}$ is a fibre bundle, $\pi_{{}_{Tb}}\big(E_T(U, \F|_U)\big)\subset W_T$ is an open subset. The form $S(\omega)$ is non-negative and its restriction to $\pi_{{}_{Tb}}\big(E_T(U, \F|_U)\big)$ is positive. It follows that
\[
  \int_{M/\F}\omega \ge\int_{\pi_{{}_{Tb}}\big(E_T(U, \F|_U)\big)}S(\omega)>0.\qedhere
\]
\end{proof}

\section{Fujiki foliations}\label{fujikifol}
We introduce here a class of holomorphic foliations on complex manifolds, which can be regarded as a foliated version of \emph{Fujiki class~$\mathcal C$ manifolds} (see \cite[Lemma 4.6]{fujiki1978closedness}). We prove that the basic Dolbeault cohomology ring of a Fujiki foliation admits a Hodge decomposition. 

\begin{defn}
 We refer to a homologically orientable Hermitian foliation $(M, \mathcal{F})$ as a {\it Fujiki foliation} if there exists a homologically orientable transversely K\"{a}hler foliation $(M', \mathcal{F}')$ and a surjective holomorphic foliated map
\begin{equation*}
    f\colon (M', \mathcal{F}') \rightarrow (M, \mathcal{F}).
\end{equation*}
\end{defn}

By the result of El Kacimi--Alaoui~\cite[Theorem 3.4.6]{kacimi1990operateurs}, the basic Dolbeault cohomology ring $H^{*,*}_{\mathcal{F}}(M';\mathbb{C})$ of a transversely K\"{a}hler foliation admits a Hodge decomposition. Here we extend this result to Fujiki foliations.

\begin{lemma}\label{Fujiki}
Let $f$ be as above. Then the induced map in basic Dolbeault cohomology $f^*\colon H^{*,*}_{\mathcal{F}}(M;\mathbb{C}) \rightarrow H^{*,*}_{\mathcal{F'}}(M';\mathbb{C})$ is injective.
\end{lemma}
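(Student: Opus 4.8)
The plan is to imitate the classical argument that a surjective holomorphic map $f\colon M'\to M$ between compact complex manifolds induces an injection on Dolbeault cohomology, using a trace (integration-over-the-fibre, or averaging) operator that splits $f^*$. In the foliated setting, the role of integration over $M'$ is played by the foliated integral $\int_{M'/\F'}$ introduced above via the basic fibration, and Lemma~\ref{positive} is exactly the positivity input that makes the trace nondegenerate. So the first step is to fix, on the transversely K\"ahler foliation $(M',\F')$, a transverse K\"ahler form $\omega'$, and to let $2q'$ be the complex codimension of $\F'$ and $2q$ that of $\F$. Given a basic Dolbeault cohomology class $[\alpha]\in H^{p,q}_{\F}(M;\C)$ represented by a $\bar\partial_B$-closed basic $(p,q)$-form $\alpha$, I want to produce a basic $(p,q)$-form $T(f^*\alpha)$ on $M$, cohomologous to a nonzero multiple of $\alpha$ whenever $f^*\alpha$ is $\bar\partial_B$-exact, thereby forcing $[\alpha]=0$.

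The key steps, in order: (i) Check that $f^*$ is well defined on basic Dolbeault cohomology — this is where ``foliated map'' is used: $f$ sends leaves of $\F'$ into leaves of $\F$, hence pulls back basic forms to basic forms and commutes with $\partial_B,\bar\partial_B$. (ii) Reduce to the generic situation where $f$ is a submersion on a dense open set $U'\subset M'$ saturated for $\F'$ (shrinking if necessary; here homological orientability and surjectivity of $f$ are needed so the image of $U'$ is dense and open). (iii) Define the trace map $T\colon\Omega^{p,q}_{\F'}(M')\to\Omega^{p,q}_{\F}(M)$ by wedging with a suitable power of $\omega'$ to reach the transverse top degree along the fibres and then integrating over the fibres of the basic fibration of $\F'$ relative to that of $\F$ — concretely, $T(\beta)$ is characterised, after pulling everything back to the transverse frame bundles, by an identity of the El Kacimi type $p_T^*(T\beta)\wedge(\text{frame forms})\wedge\pi^*(\cdots) = \pi^*_{Tb}(\text{fibre integral})\wedge\lambda$ analogous to the definition of $S$ and $\int_{M/\F}$ above. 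One shows $T$ commutes with $\bar\partial_B$ (fibre integration commutes with $d$ since the fibres are closed) and that $T\circ f^* = (\text{mult.\ by }\int_{\text{fibre}}\omega'^{\wedge r})$, where the scalar function $c=\int_{\text{fibre}}\omega'^{\wedge r}$ is basic on $M$, non-negative, and strictly positive on a dense open set by Lemma~\ref{positive}. (iv) Because $\omega'$ is transversely K\"ahler, $[\omega']$ is a nonzero class, and the transverse Hodge theory of El Kacimi (Theorem 3.4.6 of \cite{kacimi1990operateurs}) applied to $(M',\F')$ shows $c$ is, up to a $\bar\partial_B$-exact correction, a positive \emph{constant}; alternatively, replace $c$ by its foliated-harmonic projection. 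Then $f^*\alpha=\bar\partial_B\gamma$ would give $c\,\alpha=T(f^*\alpha)=\bar\partial_B(T\gamma)$ modulo the correction, i.e.\ $[\alpha]=0$ in $H^{p,q}_\F(M;\C)$.

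The main obstacle I expect is making the fibre-integration/trace operator $T$ honestly well defined in the foliated category and verifying that it lands in \emph{basic} forms on $M$ and commutes with $\bar\partial_B$: unlike the manifold case, one cannot integrate over leaf closures naively, and one must work on the transverse frame bundle $E_T$ and basic fibration, carefully matching the auxiliary data ($\chi$, $\lambda$) for $\F'$ and for $\F$ so that the defining identity descends. A secondary subtlety is ensuring $f$ can be arranged to be a submersion on a dense saturated open set (or otherwise controlling the locus where the fibre dimension jumps) so that the scalar $c$ is not identically zero — this is precisely what Lemma~\ref{positive} is set up to handle, since $c$ arises as a foliated integral of the non-negative basic form $f_*(\text{something})\wedge\omega'^{\wedge r}$ that is positive on an open set.
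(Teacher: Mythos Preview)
Your plan is a recognisable strategy, but it diverges from the paper's argument and carries technical difficulties that the paper's route simply sidesteps. The paper does \emph{not} attempt to build a trace or fibre-integration operator splitting~$f^*$. Instead it argues in two short steps: first, it proves injectivity only in the top bidegree $H^{s,s}_{\F}(M;\C)=H^{2s}_{\F}(M;\C)$, by taking a positive basic volume form $\sigma$ on $(M,\F)$, wedging $f^*\sigma$ with $\omega^t$ (where $t$ is the transverse codimension of $\F'$ restricted to a generic fibre), and applying Lemma~\ref{positive} on an open set where $f$ is a trivial fibre bundle to get $\int_{M'/\F'} f^*\sigma\wedge\omega^t>0$; second, it invokes \emph{Serre duality for basic Dolbeault cohomology} (El Kacimi, \cite[Theorem~3.3.4]{kacimi1990operateurs}): given nonzero $\alpha\in H^{p,q}_{\F}(M;\C)$, its Serre dual $\beta$ satisfies $\alpha\wedge\beta\ne 0$ in $H^{s,s}_{\F}(M;\C)$, so $f^*(\alpha\wedge\beta)\ne 0$, hence $f^*\alpha\ne 0$. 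This is exactly the argument of \cite[Lemma~7.28]{voisin76hodge} transplanted to basic cohomology.

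Your plan, by contrast, has two genuine gaps. First, the construction of a foliated trace $T\colon\Omega^{p,q}_{\F'}(M')\to\Omega^{p,q}_{\F}(M)$ commuting with $\bar\partial_B$ is not carried out, and you correctly flag it as the main obstacle; even in the non-foliated case the clean trace construction requires $f$ to be a proper submersion, which a general surjective holomorphic map is not. Second, step~(iv) is problematic: the scalar $c=\int_{\text{fibre}}\omega'^{\,r}$ is a basic \emph{function} on $M$, and the claim that it is a positive constant ``up to a $\bar\partial_B$-exact correction'' does not parse --- a $(0,0)$-form cannot be $\bar\partial_B$-exact, and there is no reason for $c$ to be constant when the fibres of $f$ vary. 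Replacing $c$ by a harmonic projection does not help, since multiplication by a non-constant function does not descend to cohomology. The Serre-duality argument avoids all of this: it never needs a map back to $\Omega^{p,q}_{\F}(M)$, only the nondegeneracy of the pairing in top degree, which is exactly what Lemma~\ref{positive} delivers.
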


\begin{proof}
The proof below follows the lines of the standard argument in the non-foliated case, see e.\,g.~\cite[Lemma~7.28]{voisin76hodge}.

Let $s$ be the complex codimension of~$\F$, and let $t$ be the complex codimension of the restriction of $\F'$ to a generic fibre of~$f$, so that the codimension of $\F'$ is $s+t$. Denote by $\omega$ the transversely K\"{a}hler form on~$M'$.
 
As $f$ is holomorphic and surjective, there is an open subset $U' \subset M'$ such that $f|_{U'}$ is a trivial fibre bundle. More precisely, we may choose an open subset $U \subset M$ satisfying $f(U')=U$, an open chart $W\subset\C^k$ of a generic fibre of~$f$, and a fibrewise biholomorphism $U'\cong U \times W$.
 
Let $\gamma \in H^{2s}_{\F}(M; \mathbb{R})$ be a nonzero top-dimensional basic cohomology class, represented by a positive basic form $\sigma \in \Omega^{2s}_{\mathcal{F}}(M)$. Then $f^*(\sigma) \wedge \omega^t$ is a basic $2(s+t)$-form on $M'$. As in~\cite[Proposition 4.9]{kacimi1986decomposition}, in order to show that the cohomology class $f^*(\gamma) \wedge [\omega]^t\in H^{2(s+t)}_{\F'}(M'; \mathbb{R})$ is nonzero it is enough to prove the following inequality:
\begin{equation}\label{posint1}
\int_{M'/\F'}f^*(\sigma) \wedge \omega^t > 0.
\end{equation}


Since $\omega$ is a transversely K\"{a}hler form on $(M', \F')$, its restriction to $(\{x\}\times W,\F'|_{\{x\}\times W})$ is also transversely K\"{a}hler for each $x\in U$, because the embedding $(\{x\}\times W,\F'|_{\{x\}\times W})\to(M', \F')$ is holomorphic foliated. Therefore, $\omega^t$ is a positive basic form on 
$(\{x\} \times W , \F'_{\{x\}\times W})$. Furthermore, the restriction of $f^*(\sigma)$ to $U \times\{y\} \subset U \times W$ is a positive basic form for any $y\in W$. It follows that $f^*(\sigma) \wedge \omega^t$ is a nonnegative basic form on $(M', \F')$ which is positive when restricted to~$U'$. Now~\eqref{posint1} follows by applying Lemma~\ref{positive}.

As a consequence, we obtain that the top-degree Dolbeault cohomology map 
\[
f^*\colon H^{s,s}_{\F}(M; \mathbb{C})\to H^{s,s}_{\F'}(M'; \mathbb{C})
\]
induced by $f\colon M'\to M$ is injective, because $H^{s,s}_{\F}(M; \mathbb{C})=H^{2s}_{\F}(M; \mathbb{C})$. The rest follows by considering Serre duality for basic cohomology, see \cite[Theorem 3.3.4]{kacimi1990operateurs}. Indeed, take a nonzero $\alpha \in H^{p,q}_{\F}(M; \mathbb{C})$ and its Serre dual $\beta \in H^{s-p,s-q}_{\F}(M; \mathbb{C})$. Then $ \alpha \wedge \beta \in H^{s,s}_{\F}(M; \mathbb{C})$ is nonzero. Hence, $f^*(\alpha \wedge \beta)\neq 0$ in  $H^{s,s}_{\F'}(M'; \mathbb{C})$, so  $f^*(\alpha)\neq 0$ in $H^{p,q}_{\F'}(M'; \mathbb{C})$.
\end{proof}

As an immediate consequence we obtain

\begin{thm}\label{hodge}
For any Fujiki foliation $(M, \F)$ there is a Hodge decomposition
\begin{equation*}
    H^r_{\F}(M; \mathbb{C})= \bigoplus_{p+q=r}H^{p,q}_{\F}(M; \mathbb{C}).
\end{equation*}
\end{thm}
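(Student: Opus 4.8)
The plan is to derive Theorem~\ref{hodge} directly from Lemma~\ref{Fujiki} together with the known Hodge decomposition for the transversely K\"ahler foliation $(M',\F')$ supplied by the definition of a Fujiki foliation. Recall that $(M',\F')$ is transversely K\"ahler and homologically orientable, so by El Kacimi--Alaoui~\cite[Theorem 3.4.6]{kacimi1990operateurs} its basic cohomology carries the decomposition $H^r_{\F'}(M';\C)=\bigoplus_{p+q=r}H^{p,q}_{\F'}(M';\C)$, and each component is bigraded by the basic Dolbeault operators $\partial$, $\bar\partial$. The holomorphic foliated map $f\colon (M',\F')\to(M,\F)$ induces a morphism of bigraded algebras $f^*$ on basic Dolbeault cohomology, which by Lemma~\ref{Fujiki} is injective in every bidegree $(p,q)$.

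First I would observe that for any $r$ there is always a natural map $\bigoplus_{p+q=r}H^{p,q}_{\F}(M;\C)\to H^r_{\F}(M;\C)$, coming from the fact that a basic $(p,q)$-form is in particular a basic $r$-form and that $\partial$, $\bar\partial$ sum to the basic de Rham differential; so the point is to show this map is an isomorphism. I would fit $f^*$ into a commutative square comparing this map for $M$ with the corresponding (iso)morphism for $M'$: the left vertical arrow is $\bigoplus f^*\colon \bigoplus_{p+q=r}H^{p,q}_{\F}(M;\C)\to\bigoplus_{p+q=r}H^{p,q}_{\F'}(M';\C)$, the right vertical arrow is $f^*\colon H^r_{\F}(M;\C)\to H^r_{\F'}(M';\C)$, the bottom horizontal arrow is the decomposition isomorphism on $M'$, and the top is the map we want to be an isomorphism. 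Both vertical arrows are injective — the left one because each $f^*$ in bidegree $(p,q)$ is injective by Lemma~\ref{Fujiki} and a direct sum of injections is injective, the right one by the $r=2s$, or rather the general-degree, case of the same lemma (indeed Lemma~\ref{Fujiki} gives injectivity of $f^*$ on all of $H^{*,*}_\F(M;\C)$, hence on the total degree-$r$ part).

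The key step is then a diagram chase: since the bottom arrow is an isomorphism and the left arrow composed with it equals the top arrow composed with the right arrow, injectivity of the top arrow follows from injectivity of the left arrow, and surjectivity of the top arrow follows because the image of the right arrow contains the image of the top arrow, while conversely any class in $H^r_{\F}(M;\C)$ maps under $f^*$ into $H^r_{\F'}(M';\C)=\bigoplus_{p+q=r}H^{p,q}_{\F'}(M';\C)$; pulling the components back through the injective maps $f^*$ in each bidegree and using that $f^*$ is a ring (in particular bidegree-respecting) homomorphism shows the class already decomposes on~$M$. More precisely, given $c\in H^r_\F(M;\C)$, write $f^*(c)=\sum_{p+q=r}c'_{p,q}$ with $c'_{p,q}\in H^{p,q}_{\F'}(M';\C)$; I would want each $c'_{p,q}$ to lie in the image of $f^*\colon H^{p,q}_\F(M;\C)\to H^{p,q}_{\F'}(M';\C)$, and then the preimages sum to a decomposition of $c$ by injectivity of $f^*$ on $H^r_\F(M;\C)$.

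The main obstacle is exactly this last point: a priori the components $c'_{p,q}$ of $f^*(c)$ need not individually be pulled back from $M$, so one cannot immediately conclude surjectivity of the decomposition map on $M$. The way around it is to argue at the level of forms rather than just images: represent $c$ by a basic $r$-form on $M$, pull it back, and use that $f^*$ commutes with $\partial$ and $\bar\partial$, so the pullback of the $(p,q)$-component of a form is the $(p,q)$-component of the pullback — hence each $c'_{p,q}$ really is $f^*$ of a class on $M$ represented by the $(p,q)$-part of a basic representative, which is itself $\bar\partial$-closed because its image under the injective $f^*$ is. This is the one place care is needed; once it is in place the theorem follows formally, and the Hodge symmetry $H^{p,q}_\F(M;\C)\cong\overline{H^{q,p}_\F(M;\C)}$ transports along $f^*$ as well, completing the decomposition.
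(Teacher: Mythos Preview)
Your strategy---pull the decomposition back from the transversely K\"ahler $(M',\F')$ via the injective $f^*$ of Lemma~\ref{Fujiki}---is exactly what the paper intends, but the execution has a genuine gap. The map $\bigoplus_{p+q=r}H^{p,q}_{\F}(M;\C)\to H^r_{\F}(M;\C)$ that you build your square on does not exist: a $\bar\partial$-closed basic $(p,q)$-form $\alpha$ has $d\alpha=\partial\alpha$, which need not vanish, so $\alpha$ does not represent a de Rham class. Your form-level repair fails for the same reason in reverse. If $\omega$ is a $d$-closed basic $r$-form representing $c$, the equation $d\omega=0$ only yields $\partial\omega^{p-1,q+1}+\bar\partial\omega^{p,q}=0$; the individual components $\omega^{p,q}$ are \emph{not} $\bar\partial$-closed in general. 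Injectivity of $f^*$ on cohomology says nothing about vanishing of $\bar\partial\omega^{p,q}$ as a form, and the Hodge decomposition on $M'$ identifies the $(p,q)$-piece of $[f^*\omega]$ with the type component of its \emph{harmonic} representative, not of $f^*\omega$ itself. (A smaller issue: Lemma~\ref{Fujiki} as stated gives injectivity on Dolbeault cohomology, not on basic de Rham in intermediate degrees, so your right vertical arrow also needs justification.)

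The clean formulation is via the Fr\"olicher spectral sequence $E_1^{p,q}=H^{p,q}_\F\Rightarrow H^{p+q}_\F(\,\cdot\,;\C)$. The holomorphic foliated map $f$ induces a morphism of these spectral sequences; on $M'$ the sequence degenerates at $E_1$ by El Kacimi--Alaoui, so all $d_r$ vanish there. Lemma~\ref{Fujiki} makes $f^*$ injective on $E_1$, whence $f^*(d_1x)=d_1f^*(x)=0$ forces $d_1=0$ on $E_1(M)$; then $E_2(M)=E_1(M)$, $f^*$ remains injective on $E_2$, and by induction all $d_r$ vanish on~$M$. Degeneration at $E_1$ is the Hodge decomposition asserted, and this is the ``immediate consequence'' the paper records.
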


\begin{rmk}
Fujiki~\cite{fujiki1978closedness} introduced several equivalent conditions specifying his class $\mathcal C$ manifolds. In particular, a compact complex manifold $M$ belongs to the class $\mathcal C$ if one of the following is satisfied:
\begin{itemize}
\item[(a)] $M$ is a holomorphic image of a compact K\"ahler manifold;
\item[(b)] $M$ is a meromorphic image of a compact K\"ahler manifold;
\item[(c)] $M$ is a bimeromorphic to a compact K\"ahler manifold.
\end{itemize}
The equivalence of (a) and (b) is established in~\cite[Lemma~4.6]{fujiki1978closedness}, while the equivalence of (b) and (c) follows from the holomorphic Hironaka Theorem, see~\cite[Remark~4.4]{fujiki1978closedness}. Neither of these arguments works in the foliated case. We therefore chose the foliated version of (a) as the definition of Fujiki foliations. We expect that the Hodge decomposition also holds for foliations satisfying the foliated version of~(b). It would be also interesting to formulate a foliated version of~(c) and prove the Hodge decomposition for basic cohomology in this setting.
\end{rmk}

\section{Basic Dolbeault cohomology of the canonical foliations on complex moment-angle manifolds}

\subsection{Complex moment-angle manifolds and their canonical foliations.}\label{subsecfol}
Let $\mathcal K$ be a simplicial complex on the set $[m]=\{1,2,\ldots,m\}$, that is, $\mathcal K$ is a collection of subsets $ I \subset [m]$ such that if $I \in \mathcal{K}$ then each $J \subset I$ also belongs to $\mathcal K$. We assume that the empty set $\varnothing$ is in~$\mathcal K$. A one-element subset $\{i\}\subset[m]$ is a \emph{vertex} if $\{i\}\in\mathcal K$; otherwise it is a \emph{ghost vertex}.

Consider the $m$-dimensional unit polydisc
\[
  \mathbb{D}^m=\{(z_1,\ldots,z_m)\in \mathbb{C}^m\colon |z_i| \le 1\text{ for }i=1,\ldots,m \}
\]
and for each $I\subset[m]$ define the subspace
\[
  D_I=\{(z_1,\ldots,z_m)\in\mathbb{D}^m\colon |z_i|=1
  \text{ for $i\notin I$}\}=\prod_{i \in I} \mathbb{D} \times \prod_{i    \notin I} \mathbb{S},
\]
where $\mathbb S$ is the boundary of the unit disc $\mathbb D$.

The \emph{moment-angle complex} $\mathcal{Z}_{\mathcal{K}}$ corresponding to a simplicial complex $\mathcal{K}$ is defined as
\[
 \mathcal{Z}_{\mathcal{K}}=\bigcup\limits_{I \in \mathcal{K}}D_I\subset\mathbb{D}^m.
\] 
The moment-angle complex is equipped with the natural action of the torus 
\[
  T^m=\{(t_1,\ldots,t_m)\in\C^m\colon |t_i|=1\}.
\]
When $\SK$ is a simplicial subdivision of an $(n-1)$-dimensional sphere, $\momang$ is a topological manifold of dimension $m+n$, see~\cite[Theorem~4.1.4]{buchstaber2015toric}, called the \emph{moment-angle manifold}.

We set $\C^\times=\C\setminus\{0\}$ and for each $I\subset[m]$ define
\[
  U_I=\{(z_1,\ldots,z_m)\in\C^m\colon z_i\ne 0
  \text{ for $i\notin I$}\}=\prod_{i \in I} \mathbb{C} \times 
  \prod_{i\notin I} \mathbb{C}^\times.
\]
By analogy with $\momang$ we define an open submanifold 
\[
  U(\SK)=\bigcup\limits_{I \in \mathcal{K}}U_I\subset\C^m.
\] 
Alternatively, $U(\SK)$ can be defined as the complement of a coordinate subspace arrangement:
\[
  U(\SK)=\C^m\setminus\bigcup_{\{i_1,\ldots,i_k\}\notin\SK}
  \{z_{i_1}=\cdots=z_{i_k}=0\},
\]
see~\cite[Proposition~4.7.3]{buchstaber2015toric}.

The manifold $U(\SK)$ has a coordinate-wise action of the algebraic torus $(\mathbb{C}^{\times})^m$, in which $T^m$ is a maximal compact subgroup. Furthermore, 
$U(\SK)$ is a toric variety  with the corresponding fan given by
\begin{equation}\label{coordfan}
	\Sigma_\SK = \{ \R_\ge\langle e_i \colon i \in I\rangle \colon I \in \SK\}, 
\end{equation}
where $e_i$ denotes the $i$-th standard basis vector of $\R^m$ and $\R_\ge\langle A\rangle$ denotes the cone spanned by a set of vectors~$A$. 

\medskip

We consider moment-angle manifolds $\momang$ with a $T^m$-invariant complex structure. The necessary and sufficient conditions for the existence of such a structure were established in~\cite{panov2012complex} and~\cite{ishida2013complex}; a short account is given below.

Assume that $\dim\momang=m+n$ is even; this can always be achieved by adding ghost vertices to~$\SK$.
A $T^m$-invariant complex structure on $\momang$ is defined by two pieces of data:
\begin{itemize}
\item[--] a complete simplicial fan $\Sigma=\{\SK;a_1,\ldots,a_m\}$ in $\R^n$ with underlying simplicial complex $\SK$ and fixed generators $a_1,\ldots,a_m$ of one-dimensional cones (a \emph{marked fan});

\item[--] a choice of a complex structure in the kernel of the linear map
\begin{equation}\label{qmap}
  q\colon\R^m\to\R^n,\quad e_i\mapsto a_i.
\end{equation}
\end{itemize}

A choice of a complex structure in $\Ker q$ is equivalent to a choice of an $\frac{m-n}2$-dimensional complex subspace $\mathfrak h\subset\C^m$ satisfying the two conditions:
\begin{itemize}
	\item[(a)] the composite $\h\hookrightarrow\C^m\stackrel{\mathrm{Re}}\longrightarrow \R^m$ is injective; 
	\item[(b)] the composite
	$\h\hookrightarrow\C^m\stackrel{\mathrm{Re}}\longrightarrow \R^m
	\stackrel q\longrightarrow\R^n$ is zero. 
\end{itemize}
Consider the $\frac{m-n}2$-dimensional complex-analytic subgroup
\[
  H=\exp(\mathfrak h)\subset(\C^\times)^m.
\]
By \cite[Theorem~3.3]{panov2012complex}, the holomorphic action of $H$ on $U(\SK)$ is free and proper, and the complex manifold $U(\SK)/H$ is $T^m$-equivariantly homeomorphic to~$\momang$. This defines a complex-analytic structure on any even-dimensional moment-angle manifold $\momang$ such that $\SK$ is the underlying complex of a complete simplicial fan.

Conversely, assume that a moment-angle manifold $\momang$ admits a complex structure invariant under the action of~$T^m$. By~\cite[Theorem 7.9]{ishida2013complex}, the manifold $\momang$ is $T^m$-equivariantly biholomorphic to the quotient $U(\SK)/H$ as above. The marked fan $\Sigma$ and the complex subspace $\mathfrak h\subset\C^m$ are recovered as follows. The action of $T^m$ on $\momang$ extends to a holomorphic action of $(\C^\times)^m$ on~$\momang$, although the latter action is not effective. The global stabilisers subgroup (the noneffectivity kernel)
\[
  H= \{ g \in (\C^\times)^m \colon g\cdot x = x \text{ for all $x \in \momang$}\}
\]  
is a complex-analytic subgroup of $(\C^\times)^m$. The Lie algebra $\h$ of $H$ is a complex subalgebra of the Lie algebra $\C^m$ of $(\C^\times)^m$. 
By \cite[Proposition 7.8]{ishida2013complex}, it satisfies the following:
\begin{itemize}
	\item[(a)] the composite $\h\hookrightarrow\C^m\stackrel{\mathrm{Re}}\longrightarrow \R^m$ is injective; 
	\item[(b)] the quotient map $q \colon \R^m \to \R^m/\mathop{\mathrm{Re}}(\h)$ sends the fan $\Sigma_\SK$ to a complete fan $\Sigma=q(\Sigma_\SK)$ in $\R^m/\mathop{\mathrm{Re}}(\h)$. 
\end{itemize}
Here we identify $\R^m$ with the Lie algebra $\mathfrak{t}$ of~$T^m$.

\medskip

Now we proceed to describe the canonical holomorphic foliation on~$\momang$.
Define a real Lie subalgebra and the corresponding Lie group
\[
  \mathfrak r=\Ker q=\mathop{\mathrm{Re}}(\h) \subset \R^m = \mathfrak{t}, \qquad R=\exp(i\r) \subset T^m.
\]
The complexification $\mathfrak r^\C\subset\C^m$ is $\Ker q^\C$, where $q^\C\colon\C^m\to\C^n$ is the complexification of~\eqref{qmap}.
Now define the complex $(m-n)$-dimensional Lie group
\begin{equation*}
  R^{\mathbb{C}}=\exp(\r^\C)=\exp(\Ker q^\C)\subset(\C^\times)^m.
\end{equation*}
The restriction of the $(\C^\times)^m$-action on $U(\SK)$ to $R^\C$ has discrete stabilisers. We therefore obtain a holomorphic foliation of $U(\SK)$ by the orbits of $R^\C$. We denote this foliation by~$\F_{\Sigma}$. In more precise terms, the stabilisers of the  $R^\C$-action and the leaves of the foliation $\F_{\Sigma}$ can be identified as follows.

\begin{proposition}[{\cite[Proposition~4.2]{p-u-v16}}]\label{leaves}
For any subset $I\subset [m]$, define the coordinate subspace
$\C^I=\C\langle e_k\colon k\in I\rangle\subset\C^m$ and the
subgroup
\[
  \Gamma_I=\Ker q^\C\cap\bigl(\Z\langle2\pi i e_1,\ldots,2\pi i e_m\rangle+\C^I\bigr).
\]
\begin{itemize}
\item[(a)] $\Gamma_I$ is a discrete subgroup of $\C^m$ whenever $I\in\SK$.

\item[(b)] 
A leaf $R^\C z$ of the foliation $\mathcal F_\Sigma$ is biholomorphic to
\[
  \Ker q^\C/\Gamma_I\cong
  (\C^\times)^{\mathop{\mathrm{rk}}\Gamma_I}\times\C^{m-n-
  \mathop{\mathrm{rk}}\Gamma_I},
\]  
where $I\in\SK$ is the set of zero coordinates of $z\in U(\SK)$.
\end{itemize}
\end{proposition}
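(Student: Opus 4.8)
The plan is to realise each leaf $R^\C z$ as a homogeneous space of the complex Lie group $R^\C$, to identify the relevant stabiliser with $\exp(\Gamma_I)$, and then to transport the computation to the Lie algebra $\Ker q^\C$ along the exponential covering. First I would record a combinatorial observation used throughout. If $z\in U(\SK)$ and $I=\{k\colon z_k=0\}$ is its set of zero coordinates, then $z\in U_J$ exactly when $I\subseteq J$, so $z\in U(\SK)=\bigcup_{J\in\SK}U_J$ forces $I\subseteq J$ for some $J\in\SK$; as $\SK$ is closed under passing to subsets, $I\in\SK$. Moreover, since $\Sigma=\{\SK;a_1,\ldots,a_m\}$ is a simplicial fan, $I\in\SK$ means that the cone spanned by $\{a_i\colon i\in I\}$ is simplicial, i.e. the vectors $a_i$, $i\in I$, are linearly independent; equivalently $\Ker q^\C\cap\C^I=0$ (and $\r\cap\R^I=0$). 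Also $\dim_\C\Ker q^\C=m-n$, because $q^\C$ is onto ($\Sigma$ being complete, the $a_i$ span).

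For part (a), note that $\Gamma_I$ is a \emph{closed} subgroup of $\C^m$, being the intersection of the linear subspace $\Ker q^\C$ with the closed set $2\pi i\Z^m+\C^I=\prod_{k\notin I}2\pi i\Z\times\prod_{k\in I}\C$. A closed subgroup of $\C^m\cong\R^{2m}$ is discrete if and only if it contains no nonzero real vector subspace. But any vector subspace contained in $\Gamma_I$ is mapped, under the coordinate projection $\C^m\to\C^{[m]\setminus I}$, into the discrete group $2\pi i\Z^{[m]\setminus I}$, hence to $0$; so it lies in $\Gamma_I\cap\C^I=\Ker q^\C\cap\C^I$, which vanishes when $I\in\SK$. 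Therefore $\Gamma_I$ is discrete.

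For part (b), observe that the stabiliser of $z$ under the coordinatewise $(\C^\times)^m$-action is the coordinate subtorus $(\C^\times)^I$, so the stabiliser of $z$ in $R^\C$ equals $R^\C\cap(\C^\times)^I$. An element $\exp(w)$ with $w\in\Ker q^\C$ lies in $(\C^\times)^I$ iff $e^{w_k}=1$ for all $k\notin I$, i.e. iff $w\in2\pi i\Z^m+\C^I$; hence the stabiliser of $z$ in $R^\C$ is $\exp(\Gamma_I)$, which is discrete by (a). Since the $R^\C$-action on $U(\SK)$ has discrete stabilisers and the leaves of $\F_\Sigma$ are by definition the $R^\C$-orbits with their homogeneous-space complex structure, the orbit map $g\mapsto gz$ induces a biholomorphism $R^\C/\exp(\Gamma_I)\cong R^\C z$. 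The exponential $\exp\colon\Ker q^\C\to R^\C$ is a covering homomorphism with kernel $\Gamma_\varnothing=\Ker q^\C\cap2\pi i\Z^m\subseteq\Gamma_I$, so $R^\C/\exp(\Gamma_I)$ is biholomorphic to $\Ker q^\C/\Gamma_I\cong\C^{m-n}/\Gamma_I$.

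It remains to compute the biholomorphism type of $\C^{m-n}/\Gamma_I$. The key point is that $\Gamma_I$ lies in the \emph{totally real} subspace $i\r\subset\r^\C=\Ker q^\C$: for $w\in\Gamma_I$ the coordinates $w_k$, $k\notin I$, are purely imaginary, so $\sum_{i\in I}w_ia_i=-\sum_{k\notin I}w_ka_k$ is a purely imaginary vector of $\C^n$, and taking real parts together with the linear independence of $\{a_i\colon i\in I\}$ gives $\mathrm{Re}\,w_i=0$ for $i\in I$ as well, whence $w\in i\R^m\cap\Ker q^\C=i\r$. Now a discrete subgroup $\Lambda$ of $\C^N$ whose real span is totally real has rank $r=\dim_\R\mathrm{span}_\R(\Lambda)\le N$ and can be carried by a $\C$-linear automorphism of $\C^N$ onto $\Z^r\times\{0\}\subset\C^r\times\C^{N-r}$; since $\C/\Z\cong\C^\times$ via $z\mapsto e^{2\pi iz}$, the quotient $\C^N/\Lambda$ is then biholomorphic to $(\C^\times)^r\times\C^{N-r}$. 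Applying this with $N=m-n$, $\Lambda=\Gamma_I$ and $r=\mathop{\mathrm{rk}}\Gamma_I$ completes the proof. The step I expect to demand the most care is verifying that the orbit map realises the leaf as the homogeneous space $R^\C/\exp(\Gamma_I)$ \emph{as complex manifolds}, rather than merely as a bijective holomorphic immersion — this is exactly where the discreteness of stabilisers is used; the totally-real observation, though the conceptual heart of the identification, reduces to a short linear-algebra argument once stated.
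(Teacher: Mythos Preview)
Your argument is correct and complete. The paper, however, does not supply its own proof of this proposition: it is stated with a citation to \cite[Proposition~4.2]{p-u-v16} and then simply used. So there is no in-paper argument to compare against; you have written out a self-contained proof where the authors defer to the literature.

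A couple of remarks on presentation. Your identification of the stabiliser and the passage through the exponential are exactly right, and the ``totally real'' observation---that $\Gamma_I\subset i\mathfrak r$---is the clean conceptual point that makes the final biholomorphism transparent. The step you flag as delicate (that the orbit map is a biholomorphism rather than merely a bijective holomorphic immersion) is in fact immediate once the stabiliser is discrete: the induced map $R^\C/\exp(\Gamma_I)\to R^\C z$ is a bijective holomorphic immersion between equidimensional complex manifolds, hence a biholomorphism. You might streamline the discreteness argument in~(a) by noting directly that the identity component of a closed subgroup of $\R^{2m}$ is a vector subspace, which your projection argument then kills.
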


Note that the leaf through a generic point (with no vanishing coordinates) is $R^\C\cong\Ker q^\C/\Gamma_\varnothing$ in the notation above.

The holomorphic foliation $\mathcal F_\Sigma$ is mapped by the quotient projection $U(\SK)\to U(\SK)/H$ to a holomorphic foliation of $\zk\cong U(\SK)/H$ by the orbits of $R^\C/H\cong R$. We denote this latter foliation by $\F_\h$ and refer to it as the \emph{canonical holomorphic foliation} of~$\zk$.

\subsection{Stellar subdivisions}\label{subm}

Let $\momang=U(\SK)/H$ be a complex moment-angle manifold, where $\SK$ is the simplicial complex underlying a complete $n$-dimensional marked simplicial fan $\Sigma$ with $m$ generating vectors $a_1,\ldots,a_m$. 

\begin{constr}[rational stellar subdivision] 
Choose a $k$-dimensional cone ${\tau \in \Sigma}$, $k>1$. We may assume without loss of generality that $\tau$ is generated by $a_1,\ldots,a_k$, and denote the corresponding simplex by $I=\{1,\ldots,k\}\in \SK$. We construct a new complete marked simplicial fan $\Sigma_{\tau}$ with $m+1$ generators $a_0=\alpha_1a_1+\cdots+\alpha_ka_k$, $a_1,\ldots,a_m$, where $\alpha_i \in \mathbb{N}$ and the maximal cones are described as follows. If $\sigma$ is a maximal cone of $\Sigma$ not containing~$\tau$, then $\sigma$ is also a maximal cone of~$\Sigma_\tau$. If $\tau\subset\sigma$ and $\sigma$ is a maximal cone of~$\Sigma$, then we replace $\sigma$ by $k$ maximal cones $\sigma_1,\ldots,\sigma_k$, where $\sigma_i$ is obtained by replacing $a_i$ in the generator set of $\sigma$ by $a_0=\alpha_1a_1+\cdots+\alpha_ka_k$. The fan $\Sigma_{\tau}$ is called a \emph{rational stellar subdivision} of $\Sigma$ at~$\tau$. It depends on the choice of positive integer parameters $\alpha_i$, although we do not reflect this in the notation. The standard \emph{stellar subdivision} corresponds to $\alpha_1=\cdots=\alpha_k=1$.

The underlying simplicial complex of $\Sigma_\tau$ is the stellar subdivision of $\sK$ at $I$, which we denote by $\sK_\tau$ or $\st(I,\SK)$ (it does not depend on the~$\alpha_i$).
\end{constr}


\begin{constr}[generalised toric blow-up]\label{folbu}
Here we define a holomorphic foliated surjection $(U(\sK_\tau), \F_{\Sigma_{\tau}})\to (U(\sK),\F_\Sigma)$, which in the case of rational fans covers the blow-down map of toric varieties. Consider projection~\eqref{qmap} corresponding to the fan $\Sigma_\tau$:
\[
  q_\tau\colon\R^{m+1}\to\R^n,\quad e_0\mapsto \alpha_1a_1+\cdots+\alpha_ka_k,\; e_i\mapsto a_i,\; 
  i=1,\ldots,k.
\]
We have
\begin{equation}\label{kerqt}
  \Ker q_\tau=\langle \alpha_1e_1+\cdots+\alpha_ke_k-e_0,\Ker q\rangle,
\end{equation}
where $\Ker q\subset\C^m$ is viewed as a subspace of $\C^{m+1}$ via the inclusion~$\C^m\to\C^{m+1}$ on the last $m$ coordinates. 
Then $\F_{\Sigma_{\tau}}$ is the foliation of $U(\sK_\tau)$ by the orbits of $R^\C_\tau=\exp(\Ker q^\C_\tau)$. 

Now define a holomorphic surjective map
\[
  f_\tau\colon\C^{m+1}\to\C^m,\quad (z_0,z_1,\ldots,z_m)\mapsto
  (z_0^{\alpha_1}z_1,\ldots,z_0^{\alpha_k}z_k,z_{k+1},\ldots,z_m).
\]
and its exponential
\begin{align*}
  \varphi_\tau\colon(\C^\times)^{m+1}&\to(\C^\times)^m,\\
  (e^{w_0},e^{w_1},\ldots,e^{w_m})&\mapsto
  (e^{\alpha_1w_0+w_1},\ldots,e^{\alpha_kw_0+w_k},e^{w_{k+1}},\ldots,e^{w_m}).
\end{align*}

\begin{proposition}\label{folU}
The map $f_\tau$ restricts to a holomorphic foliated surjection
\[
  f_{\tau}\colon (U(\sK_\tau), \F_{\Sigma_{\tau}}) \rightarrow 
  (U(\SK), \F_{\Sigma}).
\]
\end{proposition}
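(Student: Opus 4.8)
The plan is to verify three things in turn: (i) that $f_\tau$ maps $U(\sK_\tau)$ into $U(\SK)$, (ii) that this restriction is surjective, and (iii) that it is foliated, i.e.\ that it carries leaves of $\F_{\Sigma_\tau}$ into leaves of $\F_\Sigma$. Holomorphy is clear since $f_\tau$ is a polynomial map. For (i), recall the coordinate-subspace description $U(\SK)=\C^m\setminus\bigcup_{J\notin\SK}\{z_j=0,\ j\in J\}$. Given a point $z=(z_0,\ldots,z_m)\in U(\sK_\tau)$, I would show that $f_\tau(z)$ has no forbidden collection of vanishing coordinates. The key bookkeeping: the $i$-th coordinate of $f_\tau(z)$ (for $i\le k$) is $z_0^{\alpha_i}z_i$, which vanishes iff $z_0=0$ or $z_i=0$; coordinates $i>k$ are unchanged. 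So if the set of vanishing coordinates of $f_\tau(z)$ were some $J\notin\SK$, then the set of vanishing coordinates of $z$ would have to contain, for each $j\in J\cap I$, either $0$ or $j$, and would contain $J\setminus I$. One then checks that such a set of vanishing coordinates of $z$ is forbidden in $\sK_\tau=\st(I,\SK)$, using the explicit description of the simplices of a stellar subdivision (a subset $S$ not containing $0$ is a simplex of $\st(I,\SK)$ iff $S\in\SK$, and a subset containing $0$ is a simplex iff it is of the form $\{0\}\cup S'$ with $S'\cup\{i\}\in\SK$ for every $i\in I\setminus S'$, and $I\not\subset S'$). This is the combinatorial heart of step (i).

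For surjectivity (ii), given $w=(w_1,\ldots,w_m)\in U(\SK)$, I would produce a preimage. If none of $w_1,\ldots,w_k$ vanishes, take $z_0=1$, $z_i=w_i$. If some of them vanish, say exactly the $w_i$ for $i$ in a subset $A\subsetneq I$ (note $A\ne I$ is automatic, since $I\notin\SK$ would contradict $w\in U(\SK)$ only if—here one must be slightly careful, but in fact $I\in\SK$ so $A$ can equal $I$; in that degenerate case pick $z_0=0$ and any $z_i$ with $z_i\ne0$ for $i\notin A$), one sets $z_0=0$ and chooses the remaining $z_i$ so that the vanishing set of $z$ is a simplex of $\sK_\tau$; concretely one can set $z_i=1$ for $i\in I\setminus A$ and let $z_i$ be anything nonzero consistent with the fibre. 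The point is only that some choice lands in $U(\sK_\tau)$, which again follows from the description of the simplices of $\st(I,\SK)$. Since we only need existence of a preimage, not a section, this step is short.

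For the foliated property (iii), I would pass to the universal-cover picture via $\exp$. The foliation $\F_\Sigma$ is by orbits of $R^\C=\exp(\Ker q^\C)$ and $\F_{\Sigma_\tau}$ by orbits of $R^\C_\tau=\exp(\Ker q_\tau^\C)$. On the open dense torus, $f_\tau$ restricts to $\varphi_\tau\colon(\C^\times)^{m+1}\to(\C^\times)^m$, whose differential at the identity is the linear map $e_0\mapsto \alpha_1e_1+\cdots+\alpha_ke_k$, $e_i\mapsto e_i$ for $i\ge1$; call this linear map $L$. Then \eqref{kerqt} says precisely $\Ker q_\tau^\C=\langle \alpha_1e_1+\cdots+\alpha_ke_k-e_0\rangle\oplus\Ker q^\C$, and one checks directly that $L$ annihilates the generator $\alpha_1e_1+\cdots+\alpha_ke_k-e_0$ and restricts to the identity on the $\C^m$-summand, hence $L(\Ker q_\tau^\C)=\Ker q^\C$. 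Consequently $\varphi_\tau(R^\C_\tau)=R^\C$ and, because $\varphi_\tau$ is equivariant ($\varphi_\tau(g\cdot z)=\varphi_\tau(g)\cdot\varphi_\tau(z)$ on the torus), $\varphi_\tau$ carries $R^\C_\tau$-orbits onto $R^\C$-orbits. This identity of group actions then extends by continuity (density of the torus) to the statement that $f_\tau$ maps each leaf of $\F_{\Sigma_\tau}$ onto a leaf of $\F_\Sigma$; equivalently $df_\tau(T\F_{\Sigma_\tau})\subset T\F_\Sigma$, which is what ``foliated'' requires.

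I expect step (i)—checking that forbidden vanishing patterns cannot arise—to be the main obstacle, because it is where one must use the precise combinatorics of $\st(I,\SK)$ rather than a soft argument; steps (ii) and (iii) are then comparatively mechanical, with (iii) reducing to the linear-algebra identity $L(\Ker q_\tau^\C)=\Ker q^\C$ that is essentially \eqref{kerqt} read backwards.
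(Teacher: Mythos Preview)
Your proposal is correct, and step (iii) matches the paper's argument essentially verbatim: the paper states the equivariance $f_\tau(g\cdot z)=\varphi_\tau(g)\cdot f_\tau(z)$ directly for all $z\in\C^{m+1}$ (rather than on the torus and then extending by density), and then observes $\varphi_\tau(R^\C_\tau)=R^\C$ just as you do.

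The genuine difference is in step (i). The paper bypasses the combinatorial case analysis entirely by invoking the toric dictionary: $U(\SK)$ and $U(\sK_\tau)$ are the toric varieties attached to the fans $\Sigma_\SK$ and $\Sigma_{\sK_\tau}$ of~\eqref{coordfan}, and the linear map underlying $f_\tau$ sends each cone of $\Sigma_{\sK_\tau}$ into a cone of $\Sigma_\SK$; this is a morphism of fans, hence automatically a morphism of toric varieties $U(\sK_\tau)\to U(\SK)$. Your direct approach via the coordinate-subspace arrangement is more elementary and self-contained, but it is also where your sketch is loosest (for instance, simplices $S\subset[m]$ of $\st(I,\SK)$ not containing $0$ must satisfy $S\in\SK$ \emph{and} $I\not\subset S$, not just $S\in\SK$). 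The fan-morphism viewpoint also makes surjectivity immediate, which the paper does not spell out; your explicit construction of preimages is a reasonable substitute.
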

\begin{proof}
Recall that $U(\sK)$ is the toric variety corresponding to the fan ${\Sigma_{\mathcal K}}$, see~\eqref{coordfan}, and $U(\sK_\tau)$ corresponds to $\Sigma_{\sK_\tau}$. One checks easily that the map $f_\tau\colon\C^{m+1}\to\C^m$ restricts to a toric morphism  $U(\sK_\tau)\to U(\SK)$, namely, the morphism induced by the map of fans 
$\Sigma_{\sK_\tau}\to\Sigma_{\mathcal K}$ sending $e_0$ to $\alpha_1e_1+\cdots+\alpha_ke_k$ and sending $e_i$ to $e_i$ for $i=1,\ldots,m$.


The map $f_\tau$ is $\varphi_\tau$-equivariant, that is, 
$f_\tau(g\cdot z)=\varphi_\tau(g)\cdot f_\tau(z)$ for $z\in\C^{m+1}$ and $g\in(\C^\times)^{m+1}$. Furthermore, 
$\varphi_\tau(R^\C_\tau)=R^\C$ by inspection. It follows that $f_\tau$ takes $R^\C_\tau$-orbits (leaves of~$\F_{\Sigma_{\tau}}$) to $R^\C$-orbits (leaves of~$\F_\Sigma$), and therefore defines a holomorphic foliated map.
\end{proof}

Let $\C^{m-k}\subset\C^m$ be the coordinate subspace with the first $k$ coordinates vanishing. The preimage of any $x=(0,\ldots,0,x_{k+1},\ldots,x_m)\in U(\sK)\cap\C^{m-k}$ under the map 
$f_{\tau}\colon U(\sK_\tau) \rightarrow U(\SK)$ is given by
\[
  f_\tau^{-1}(x)=\{0\}\times(\C^k\setminus\{0\})\times
  \{(x_{k+1},\ldots,x_m)\}\subset\C^{m+1}.
\]
On the other hand, the $f_\tau$-preimage of $y=(y_1,\ldots,y_m)\in U(\sK)\setminus\C^{m-k}$ is the orbit of $(1,y_1,\ldots,y_m)$ under the action of the one-parameter subgroup 
\begin{multline*}
  \exp\langle \alpha_1e_1+\cdots+\alpha_ke_k-e_0\rangle\\=\{(e^{-t},e^{\alpha_1t},  
  \ldots,e^{\alpha_kt},1,\ldots,1),\;t\in\R\}\subset(\C^\times)^{m+1}.
\end{multline*}
\end{constr}

We refer to the holomorphic surjection $f_{\tau}\colon (U(\sK_\tau), \F_{\Sigma_{\tau}}) \rightarrow (U(\SK), \F_{\Sigma})$ as a \emph{generalised toric blow-down map}.

\begin{rmk}
When $\Sigma$ is a rational fan, both $R^\C\subset(\C^\times)^m$ and $R_\tau^\C\subset 
(\C^\times)^{m+1}$ are closed subgroups. In the case $\alpha_1=\cdots=\alpha_k=1$, the map $f_\tau\colon U(\sK_\tau)\to U(\sK)$ covers the standard blow-down map $V_{\Sigma_\tau}\to V_\Sigma$ of the quotient toric varieties $V_{\Sigma_\tau}=U(\sK_\tau)/R^\C_\tau$ and $V_{\Sigma}=U(\sK)/R^\C$. 
\end{rmk}

\subsection{Subdividing to a polytopal fan.}
A fan $\Sigma$ is \emph{polytopal} if it is the normal fan of a (bounded) convex polytope. Equivalently, a fan is polytopal if it can be obtained
by taking cones with apex~$0$ over the faces of a convex polytope in~$\R^n$ containing $0$ in its interior. Note that a polytopal fan is complete.

As is well known in toric geometry, rational polytopal fans correspond to projective toric varieties. The \emph{toric Chow Lemma}~\cite{dani78} states that for any complete toric variety $V$, there is an equivariant surjective birational morphism $V'\to V$ from a nonsingular projective (or K\"{a}hler) toric variety~$V'$. Furthermore, a nonsingular projective $V'$ can be obtained from $V$ as the result of a sequence of blow-ups along torus-invariant subvarieties, see~\cite[Theorem 4.5]{abramovich1999note}. In the language of fans, for any complete rational fan $\Sigma$ there is a sequence of stellar subdivisions turning $\Sigma$ into a nonsingular polytopal fan. A combinatorial version of this result for 
PL spheres is proved in~\cite{adiprasito2015derived}. 

Here we prove a generalisation of this result to non-rational fans (Theorem~\ref{hir}) and foliated moment-angle manifolds (Theorem~\ref{hirmom}). The proof is by adapting the results of Abramovich--Matsuki--Rashid~\cite{abramovich1999note}, de~Concini--Procesi~\cite{concini1985completesymm} and Adiprasito--Izmestiev~\cite{adiprasito2015derived} to the non-rational case.

Let $\Sigma$ be a marked simplicial fan in $\R^n$ with generators $a_1, \ldots, a_m$. We refer to a marked fan $\Sigma'$ with generators $a_1, \ldots, a_m, b_1, \ldots, b_s$ as a \emph{rational subdivision} of~$\Sigma$ if $\Sigma'$ is a subdivision of~$\Sigma$ and for any cone $\sigma=\R\langle a_{i_1}, \ldots, a_{i_k}\rangle$ of $\Sigma$ and any $b_j \in \sigma$, the vector $b_j$ belongs to the lattice generated by $a_{i_1}, \ldots, a_{i_k}$.

\begin{lemma}\label{stellarsubdivisionofsubdision}
Let $\Sigma$ be a marked simplicial fan in~$\R^n$, and let $\Sigma'$ be its rational subdivision. Then there is a sequence of stellar subdivisions of $\Sigma$ at 2-dimensional cones such that resulting fan $\Sigma''$ is a subdivision of~$\Sigma'$.
\end{lemma}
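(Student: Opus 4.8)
The plan is to reduce to the combinatorics of de~Concini--Procesi~\cite{concini1985completesymm} and Abramovich--Matsuki--Rashid~\cite{abramovich1999note}, made independent of any global lattice by means of the rationality hypothesis, and then to supply the one extra ingredient that a stellar subdivision at a cone of dimension $\ge3$ is dominated by stellar subdivisions at $2$-dimensional cones. Since $\Sigma'$ is a simplicial refinement of the simplicial fan $\Sigma$, there is a finite sequence of stellar subdivisions $\Sigma=\Delta_0\to\Delta_1\to\cdots\to\Delta_N$ at cones of arbitrary dimension, with $\Delta_N$ refining $\Sigma'$, in which the new ray at the $i$-th step is a chosen generator $b_{j_i}$ of a ray of $\Sigma'$; this is the purely combinatorial input of \cite{concini1985completesymm,abramovich1999note,adiprasito2015derived} and does not involve a lattice. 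The hypothesis that $\Sigma'$ is a \emph{rational} subdivision will be used only in realising these steps by $2$-dimensional ones.

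The key local statement is: for a simplicial cone $\sigma=\R_\ge\langle a_1,\dots,a_k\rangle$ and a lattice vector $b=\alpha_1a_1+\cdots+\alpha_ka_k$ with all $\alpha_i\in\Z_{\ge0}$, there is a finite sequence of stellar subdivisions at $2$-dimensional cones, each with new ray $u+u'$ for current generators $u,u'$, after which $b$ spans a ray; moreover each such move is unimodular, so the lattice spanned by the generators of each resulting cone coincides with that of the cone it subdivides, and in particular every cone of the final fan is unimodular over $\Z\langle a_1,\dots,a_k\rangle$. I would prove this by induction on the pair $(k,\sum_i\alpha_i)$ in lexicographic order: reindexing so that $\alpha_1\le\alpha_2$ are the two smallest nonzero coefficients, subdivide at $\langle a_1,a_2\rangle$ with new ray $c=a_1+a_2$; then $b$ lies in $\R_\ge\langle c,a_2,a_3,\dots,a_k\rangle$ with coefficient vector $(\alpha_1,\alpha_2-\alpha_1,\alpha_3,\dots,\alpha_k)$, of strictly smaller total, and in a proper face of strictly smaller dimension when $\alpha_1=\alpha_2$. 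For $k=2$ this is exactly the Stern--Brocot/continued-fraction subdivision of a $2$-cone. A short direct check at the facets of the minimal face of $\sigma$ containing $b$ shows that the fan produced this way refines the stellar subdivision of $\sigma$ at that face, which is what lets the local statement replace a higher-dimensional stellar subdivision.

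To assemble these, I would process the cones of $\Sigma$ in order of increasing dimension, introducing the rays of $\Sigma'$ contained in a cone $\sigma$ by the local moves once the proper faces of $\sigma$ have already been subdivided compatibly with $\Sigma'$; here it is used that $\Sigma$ is marked, so adjacent maximal cones carry the same lattice on their common face. Since only unimodular $u+u'$-moves are used, at every stage the current fan is unimodular over the original lattices of the maximal cones of $\Sigma$, so that each ray $b_j$ of $\Sigma'$ remains an $\mathbb N$-combination of the generators of whichever cone of the current fan contains it in its interior --- this is precisely where the rationality of $\Sigma'$ enters --- and the local statement keeps applying; over-subdivision is harmless because we only require $\Sigma''$ to \emph{refine}, not to equal, $\Sigma'$.

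I expect the main obstacle to be exactly this combinatorial-compatibility step: one must organise the $2$-cone subdivisions performed in the various cones of $\Sigma$ so that they glue to a single fan and so that none of the mediant rays $u+u'$ introduced along the way straddles a wall of $\Sigma'$, forcing the resulting fan to refine $\Sigma'$ rather than merely to contain all of its rays. This is the point at which the arguments of \cite{abramovich1999note,concini1985completesymm,adiprasito2015derived} have to be transcribed into the marked, non-rational setting, with ``rational subdivision'' playing the role of ``lattice-compatible subdivision''.
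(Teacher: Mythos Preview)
Your approach is substantially more intricate than the paper's, and the extra work is largely unnecessary. The paper's proof is two sentences: first, it observes that de~Concini--Procesi \cite[\S2.4, Proposition]{concini1985completesymm} already prove the full statement (stellar subdivisions \emph{at $2$-cones} refining a given subdivision) for \emph{rational} fans; second, it reduces the non-rational case to the rational one by working cone-wise, using that the faces of a single simplicial cone $\sigma$ form a fan which is rational with respect to the lattice spanned by the marked generators of~$\sigma$, and that the hypothesis ``rational subdivision'' says precisely that $\Sigma'\cap\sigma$ is rational with respect to this same lattice. So one applies de~Concini--Procesi inside each maximal cone and is done. You seem to believe the cited results only give stellar subdivisions at cones of arbitrary dimension, prompting you to develop the Stern--Brocot/mediant reduction as a separate layer; but that reduction is already the content of de~Concini--Procesi's argument, so you are reinventing it.

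More seriously, the step you flag as the ``main obstacle''---ensuring that the $2$-cone moves performed in the various maximal cones glue to a global fan and that no intermediate mediant ray straddles a wall of~$\Sigma'$---is left unresolved in your proposal, so as written there is a genuine gap. In the paper's organisation this issue is absorbed by the black-box citation: de~Concini--Procesi's algorithm is applied to the rational fan $\sigma$ together with its rational subdivision $\sigma\cap\Sigma'$, and it outputs a sequence of $2$-cone stellar subdivisions of $\sigma$ whose result already refines $\sigma\cap\Sigma'$; any effect on neighbouring maximal cones only over-subdivides them (with new rays still lying in the lattice of the shared face, since that lattice is generated by common marked vectors), and one then continues with the next cone. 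Your mediant procedure introduces auxiliary rays whose compatibility with $\Sigma'$ you have not controlled, and the induction you sketch (on $(k,\sum_i\alpha_i)$) shows only that the target ray $b$ eventually appears, not that every intermediate cone lies inside a cone of~$\Sigma'$. The cleanest fix is simply to adopt the paper's reduction: note that a marked simplicial cone is rational with respect to its own generator lattice and invoke \cite[\S2.4]{concini1985completesymm} directly.
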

\begin{proof}
In the case of a rational fan $\Sigma$ this is proved in~\cite[\S2.4, Proposition]{concini1985completesymm}. For arbitrary $\Sigma$, we can argue cone-wise: it is enough to show that for any maximal cone $\sigma \in \Sigma$ there is a sequence of stellar subdivisions of $\sigma$ such that the resulting union of cones is a subdivision of $\sigma\cap\Sigma'$. This follows by applying the rational result to the rational fan $\sigma$ and its rational subdivision~$\sigma\cap\Sigma'$. (Note that the fan formed by all faces of a simplicial cone $\sigma$ is rational with respect to the lattice spanned by any generator set of~$\sigma$.)
\end{proof}

\begin{lemma}\label{polytopality}
Let $\Sigma$, $\Sigma'$ and $\Sigma''$ be as in Lemma~\ref{stellarsubdivisionofsubdision}. Assume $\Sigma'$ is polytopal. Then $\Sigma''$ is also polytopal.
\end{lemma}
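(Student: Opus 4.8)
The plan is to exploit the fact that polytopality of a complete simplicial fan is controlled by the existence of a strictly convex piecewise-linear support function, and that this property is stable under stellar subdivision. First I would recall the standard dictionary: a complete simplicial fan $\Sigma'$ is polytopal precisely when there is a continuous function $\varphi\colon\R^n\to\R$ which is linear on each cone of $\Sigma'$ and strictly convex across the walls (equivalently, $\Sigma'$ admits a support function whose graph is the boundary of a convex polyhedron). The key elementary observation is that if $\varphi$ is strictly convex with respect to a fan $\Sigma'$, and $\Sigma''$ refines $\Sigma'$ (every cone of $\Sigma''$ lies inside a cone of $\Sigma'$), then $\varphi$ is automatically piecewise-linear and convex with respect to $\Sigma''$ as well — though possibly not strictly convex across the new walls introduced by the subdivision. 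So the heart of the matter is to repair strict convexity along exactly those walls created by the stellar subdivisions relating $\Sigma$ to $\Sigma''$, without destroying it along the old walls.

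The second ingredient is that $\Sigma''$ is obtained from $\Sigma$ by a finite sequence of stellar subdivisions at $2$-dimensional cones, as provided by Lemma~\ref{stellarsubdivisionofsubdision}. I would argue by induction on the length of this sequence, so it suffices to handle a single stellar subdivision: if $\widetilde\Sigma$ is polytopal and $\widetilde\Sigma_\rho$ is its stellar subdivision at a ray/edge $\rho$ with a newly inserted generator $b$ lying in the relative interior of $\rho$, then $\widetilde\Sigma_\rho$ is polytopal. For this, take a strictly convex support function $\varphi$ for $\widetilde\Sigma$; it is convex (not strictly) for $\widetilde\Sigma_\rho$. Now perturb: let $\psi$ be the piecewise-linear function on $\widetilde\Sigma_\rho$ which is $0$ on all rays of $\widetilde\Sigma$ and takes a value $-\varepsilon<0$ at the new generator $b$ (extended linearly on each maximal cone of $\widetilde\Sigma_\rho$). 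A direct local computation at the new walls — those separating the subdivided cones $\sigma_i$ — shows that $\psi$ is strictly convex across precisely these walls, while remaining convex across the old walls; since strict convexity is an open condition, $\varphi+\psi$ is strictly convex across all walls of $\widetilde\Sigma_\rho$ for $\varepsilon$ small. Hence $\widetilde\Sigma_\rho$ is polytopal. Composing through the induction chain from $\Sigma$ to $\Sigma''$ (each step adds one generator via stellar subdivision at a $2$-cone) yields polytopality of $\Sigma''$.

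One subtlety: the statement only assumes $\Sigma'$ polytopal, and $\Sigma''$ is a refinement of $\Sigma'$, not necessarily reachable from $\Sigma'$ by stellar subdivisions; but we do not need that — we reach $\Sigma''$ from $\Sigma$ by stellars, and the induction above only ever uses polytopality of the current fan in the chain, starting from... $\Sigma$, which need not be polytopal. So the induction must instead be seeded differently: since $\Sigma''$ refines the polytopal fan $\Sigma'$, I would run the argument in reverse — or more cleanly, observe that a strictly convex support function for $\Sigma'$ pulls back to a convex PL function on $\Sigma''$, and then perform the perturbation argument once, simultaneously at all walls of $\Sigma''$ not already walls of $\Sigma'$, using that $\Sigma''$ is obtained from $\Sigma'$ by stellar subdivisions at $2$-cones after all (apply Lemma~\ref{stellarsubdivisionofsubdision} with roles arranged so the chain passes through $\Sigma'$, or note $\Sigma''$ common-refines $\Sigma$ and $\Sigma'$ and that one may take the stellar chain to factor through $\Sigma'$). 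The main obstacle is precisely this bookkeeping: ensuring the sequence of stellar subdivisions can be taken to visit $\Sigma'$, so that the convex-perturbation step only has to fix finitely many new walls each introduced by a single, well-understood stellar move. Once that is arranged, the convexity perturbation is a routine finite-dimensional computation.
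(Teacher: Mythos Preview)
Your instinct to work with strictly convex support functions and tent-perturbations is exactly right, and you correctly identify the difficulty: the induction on stellar subdivisions cannot be seeded at $\Sigma$, since $\Sigma$ is not assumed polytopal. But your proposed fix---arranging the stellar chain from $\Sigma$ to $\Sigma''$ to \emph{factor through} $\Sigma'$---does not work. Nothing in Lemma~\ref{stellarsubdivisionofsubdision} says $\Sigma'$ is reachable from $\Sigma$ by stellar moves (it is an arbitrary rational subdivision), and in general it is not. So you cannot get the perturbing tent functions from a nonexistent $\Sigma'\to\Sigma''$ stellar chain.

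The paper's argument avoids this entirely by combining two piecewise-linear functions coming from \emph{different} sources. First, the stellar chain $\Sigma\to\Sigma''$ that you already have yields (by accumulating your tent functions) a PL function $h''$ on $\Sigma''$ that is strictly convex across every wall of $\Sigma''$ \emph{except} those contained in walls of~$\Sigma$. Second, the strictly convex support function $h'$ of the polytopal fan $\Sigma'$, pulled back to $\Sigma''$, is strictly convex across every wall of $\Sigma''$ separating two maximal cones lying in distinct maximal cones of~$\Sigma'$. The point you are missing is that these two sets of walls cover everything: any wall of $\Sigma''$ contained in a wall of $\Sigma$ separates maximal cones lying in different maximal cones of $\Sigma$, hence (since $\Sigma'$ refines $\Sigma$) in different maximal cones of~$\Sigma'$, so $h'$ is strictly convex there. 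Then $h'+\varepsilon h''$ is strictly convex across all walls of $\Sigma''$ for small~$\varepsilon$. No factoring of the stellar chain through $\Sigma'$ is needed.
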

\begin{proof}
This is proved in~\cite[Claim~3]{adiprasito2015derived}. Namely, we first choose a function $h''$ that is linear on the $n$-dimensional cones of $\Sigma''$ and is strictly convex across all $(n-1)$-dimensional cones of $\Sigma''$ except $(n-1)$-dimensional cones of~$\Sigma$. Since $\Sigma'$ is a polytopal fan, there is also a function $h'$ that is linear on the cones of $\Sigma'$ and is strictly convex across every $(n-1)$-dimensional cone of~$\Sigma'$. Since $\Sigma''$ is a subdivision of $\Sigma'$, the function $h'$ is also linear on the cones of~$\Sigma''$ and it is strictly convex across those $(n-1)$-dimensional cones of $\Sigma''$ where the convexity of $h''$ can fail. Now, for an $\varepsilon>0$ small enough, $h'+\varepsilon h''$ is strictly convex at all $(n-1)$-dimensional cones of~$\Sigma''$.
\end{proof}

\begin{rmk}
In \cite[Theorem~4.5]{abramovich1999note}, the authors prove a rational version of Lemma~\ref{polytopality} using properties of projective and separated morphisms of toric varieties.
It would be interesting to define these concepts and establish their appropriate properties in the category of non-commutative toric varieties (or foliated moment-angle manifolds).
\end{rmk}

\begin{thm}\label{hir}
Let $\Sigma$ be a complete simplicial fan in $\R^n$ with generators $a_1,\ldots, a_m$. Then there exists a sequence of stellar subdivisions of $\Sigma$ such that the resulting fan $\Sigma'$ is polytopal.
\end{thm}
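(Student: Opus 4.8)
The plan is to reduce the non-rational statement to the known combinatorial result of Adiprasito--Izmestiev for PL-sphere triangulations (\cite{adiprasito2015derived}) together with the rational subdivision machinery of de Concini--Procesi (\cite{concini1985completesymm}), both of which are already quoted in Lemmas~\ref{stellarsubdivisionofsubdision} and~\ref{polytopality}. First I would pass from the fan $\Sigma$ to its underlying simplicial complex $\SK$, a triangulation of the $(n-1)$-sphere (after possibly discarding ghost vertices). By the main theorem of \cite{adiprasito2015derived}, there is a sequence of \emph{combinatorial} stellar subdivisions of $\SK$ producing a \emph{polytopal} simplicial complex $\SK'$, i.e.\ one that is combinatorially isomorphic to the boundary complex of a simplicial polytope. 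The subtlety is that a combinatorial stellar subdivision of $\SK$ must be realised by an honest stellar subdivision of the \emph{fan} $\Sigma$, which requires choosing, at each step, a ray inside the relevant cone; any interior ray works, and a stellar subdivision of a simplicial fan stays simplicial and complete, so this lifting is unobstructed. Carrying out the whole sequence yields a complete simplicial fan $\widetilde\Sigma$ whose underlying complex is the polytopal sphere $\SK'$.

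The remaining point is that having a polytopal \emph{underlying complex} is weaker than being a polytopal \emph{fan}: we still need the rays $a_i$ (and the newly added rays) to be positioned so that $\widetilde\Sigma$ is the normal fan of an actual convex polytope, equivalently so that there is a strictly convex piecewise-linear support function. To fix this I would first invoke Lemma~\ref{stellarsubdivisionofsubdision}: since $\SK'$ is polytopal, choose a geometric realisation of $\SK'$ as the boundary of a simplicial polytope $P\subset\R^n$ with $0$ in its interior, and let $\Sigma_P$ be the fan over the faces of $P$. This $\Sigma_P$ is a \emph{rational} subdivision of $\widetilde\Sigma$ in the sense defined just before Lemma~\ref{stellarsubdivisionofsubdision} --- here one has to arrange that the new ray generators lie in the sublattices spanned by the old generators of the containing cone, which is possible because within a single simplicial cone one is free to rescale. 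Lemma~\ref{stellarsubdivisionofsubdision} then produces a further sequence of stellar subdivisions (at $2$-cones) of $\widetilde\Sigma$ whose result $\Sigma''$ refines $\Sigma_P$, and Lemma~\ref{polytopality} upgrades this to the conclusion that $\Sigma''$ is itself polytopal. Composing the two sequences of stellar subdivisions gives the desired $\Sigma'=\Sigma''$.

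The main obstacle is bookkeeping at the interface between the purely combinatorial input and the geometric (lattice-theoretic) requirements: one must check that the fan stays simplicial and complete after every stellar subdivision (routine, since stellar subdivision of a simplicial fan is simplicial and preserves the support), and, more delicately, that the auxiliary polytopal fan $\Sigma_P$ genuinely qualifies as a \emph{rational} subdivision of $\widetilde\Sigma$, which forces a careful choice of the generators of $\Sigma_P$ cone by cone. Once that is set up, Lemmas~\ref{stellarsubdivisionofsubdision} and~\ref{polytopality} do all the work, and the theorem follows by concatenating the two sequences of stellar subdivisions.
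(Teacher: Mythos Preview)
Your proposal has a genuine gap at the step where you claim $\Sigma_P$ is a (rational) subdivision of $\widetilde\Sigma$. Recall that a subdivision of fans means every cone of $\Sigma_P$ is contained in a cone of $\widetilde\Sigma$. But $\Sigma_P$ and $\widetilde\Sigma$ have the \emph{same} underlying simplicial complex $\SK'$: they have the same number of rays and combinatorially identical maximal cones. If one such fan were a subdivision of the other, they would have to be equal as fans, which would already say $\widetilde\Sigma$ is polytopal --- the very thing you are trying to prove. The rays of $\widetilde\Sigma$ are the original generators $a_1,\ldots,a_m$ together with the rays you inserted during the stellar subdivisions, while the rays of $\Sigma_P$ come from the vertices of an unrelated polytope $P$; there is no reason for these two ray-sets to coincide, and no amount of ``rescaling within a cone'' fixes this, because the rays of $\Sigma_P$ need not even lie in the correct cones of $\widetilde\Sigma$. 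So Lemmas~\ref{stellarsubdivisionofsubdision} and~\ref{polytopality} do not apply to the pair $(\widetilde\Sigma,\Sigma_P)$.

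The paper's proof avoids this by producing the auxiliary polytopal fan in a way that makes it a genuine geometric subdivision of $\Sigma$ from the outset: one slices $\Sigma$ by the hyperplane arrangement spanned by its own $(n-1)$-cones, obtaining a polytopal fan $\Sigma_h$ that refines~$\Sigma$ (this is the classical observation of Danilov/Oda). The new rays of $\Sigma_h$ are then perturbed, each within its ambient cone of $\Sigma$, to lie in the lattice spanned by that cone's generators; polytopality survives because it is an open condition. The resulting $\Sigma_h'$ is both polytopal and a rational subdivision of $\Sigma$, so Lemmas~\ref{stellarsubdivisionofsubdision} and~\ref{polytopality} apply directly. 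The Adiprasito--Izmestiev theorem enters only through the proof of Lemma~\ref{polytopality}, not as a first step on the combinatorial sphere.
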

\begin{proof}
Consider the hyperplane arrangement formed by the hyperplanes containing all $(n-1)$-dimensional cones of~$\Sigma$. As observed in~\cite[Lemma~6.9.2]{dani78} (see also~\cite[Proposition 2.17]{oda88}), the resulting fan $\Sigma_h$ is polytopal and is a subdivision of~$\Sigma$. Let $l_1,\ldots, l_s$ be the new $1$-dimensional cones of $\Sigma_h$. Suppose $l_j$ lies in the interior of a cone $\sigma_j$ of~$\Sigma$. Close to $l_j$ choose a ray $l_j'$ which is rational with respect to the lattice spanned by the generators of~$\sigma_j$. The fan obtained from $\Sigma_h$ by replacing $l_j$ by~$l'_j$ is still polytopal, as the polytopality is an open condition for simplicial fans. Applying this consecutively to all $l_j$'s we obtain a rational subdivision $\Sigma_h'$ of $\Sigma$ which is polytopal.  

By Lemma~\ref{stellarsubdivisionofsubdision}, there is a sequence of stellar subdivisions of $\Sigma$ such that the resulting fan $\Sigma'$ is a subdivision of the polytopal fan~ $\Sigma'_h$. Hence, the fan $\Sigma'$ is polytopal by Lemma~\ref{polytopality}.
\end{proof}

Now consider a complex moment-angle manifold $\momang$ with the corresponding marked fan $\Sigma$ and the holomorphic foliation $\F_{\h}$ defined in Subsection~\ref{subsecfol}. By~\cite[Proposition~4.4]{p-u-v16}, the foliation $\F_\h$ is transversely K\"{a}hler if $\Sigma$ is a polytopal fan. 
We have the following foliated version of toric Chow Lemma.

\begin{thm}\label{hirmom}
For any complex moment-angle manifold $\momang$ with the corresponding marked fan $\Sigma$, there exists a holomorphic foliated surjection 
\[
f_{\mathcal{Z}} \colon (\mathcal{Z}_{\SK'}, \F_{\h'}) \longrightarrow (\momang, \F_{\h})
\]
where $(\mathcal{Z}_{\SK'}, \F_{\h'})$ is a transverse K\"{a}hler foliated moment-angle manifold corresponding to a polytopal marked fan $\Sigma'$, and  $f_{\mathcal{Z}}$ is induced by a foliated surjection 
\[
 f\colon (U(\SK'), \F_{\Sigma'}) \longrightarrow (U(\SK), \F_{\Sigma})
\]
given by a composition of generalised toric blow-ups.
\end{thm}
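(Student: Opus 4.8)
The plan is to combine Theorem~\ref{hir} with the generalised toric blow-up construction of \S\ref{subm}. By Theorem~\ref{hir}, there is a finite sequence of stellar subdivisions $\Sigma=\Sigma^{(0)}\rightsquigarrow\Sigma^{(1)}\rightsquigarrow\cdots\rightsquigarrow\Sigma^{(N)}=\Sigma'$ such that the terminal fan $\Sigma'$ is polytopal. The idea is to realise each stellar subdivision step $\Sigma^{(j)}\rightsquigarrow\Sigma^{(j+1)}$ on the level of the complement-type manifolds $U(\SK)$ by Construction~\ref{folbu}. A subtlety is that Construction~\ref{folbu} as stated produces a \emph{rational} stellar subdivision with generator $a_0=\alpha_1a_1+\cdots+\alpha_ka_k$, where the $\alpha_i$ are positive integers, whereas the combinatorial stellar subdivision underlying $\sK_\tau=\st(I,\SK)$ does not depend on the $\alpha_i$. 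Since the complex structure on $\momang$ (and the foliation $\F_\h$) depends on the fan $\Sigma$ only through its underlying simplicial complex together with the choice of generators, while the homeomorphism type of $\momang$ and of $U(\SK)$ depends only on $\SK$, we are free to choose each new generator $a_0$ at each stage to be of the required rational form $\alpha_1a_1+\cdots+\alpha_ka_k$ with $\alpha_i\in\mathbb{N}$: this changes neither the underlying simplicial complex $\st(I,\SK)$ nor the polytopality of the final fan (polytopality of $\Sigma'$ is a property of its combinatorics plus generators, and a perturbation of an interior ray keeps it polytopal, exactly as in the proof of Theorem~\ref{hir}). Thus I may assume the sequence in Theorem~\ref{hir} consists of rational stellar subdivisions in the sense of the Construction in \S\ref{subm}.

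Granting this, the steps are as follows. First, for each step $j$, Construction~\ref{folbu} and Proposition~\ref{folU} furnish a holomorphic foliated surjection
\[
  f^{(j)}\colon \bigl(U(\SK^{(j+1)}),\F_{\Sigma^{(j+1)}}\bigr)\longrightarrow\bigl(U(\SK^{(j)}),\F_{\Sigma^{(j)}}\bigr).
\]
Second, I compose them: set $f=f^{(0)}\circ f^{(1)}\circ\cdots\circ f^{(N-1)}\colon (U(\SK'),\F_{\Sigma'})\to(U(\SK),\F_\Sigma)$. A composition of holomorphic foliated surjections is a holomorphic foliated surjection (holomorphicity and surjectivity are obviously preserved, and each $f^{(j)}$ carries leaves of $\F_{\Sigma^{(j+1)}}$ into leaves of $\F_{\Sigma^{(j)}}$, hence so does the composite), which gives the required map $f\colon (U(\SK'),\F_{\Sigma'})\to(U(\SK),\F_\Sigma)$.

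Third, I pass to the quotients by the subgroups $H'=\exp(\h')$ and $H=\exp(\h)$. Here I must check that the composite map $f$ descends to a well-defined map $f_{\mathcal Z}\colon\momang[\SK']=U(\SK')/H'\to U(\SK)/H=\momang$. At each blow-up stage, the subspace $\h^{(j)}\subset\C^{m_j}$ is included into $\h^{(j+1)}\subset\C^{m_j+1}$ via the last $m_j$ coordinates (compare~\eqref{kerqt}, which records the analogous statement for $\r^\C=\Ker q^\C$), and the differential of $f^{(j)}$ at the torus level is precisely $\varphi^{(j)}_\tau$, which by the proof of Proposition~\ref{folU} sends $R^\C_\tau$ onto $R^\C$; the same computation shows $\varphi^{(j)}_\tau$ sends $\exp(\h^{(j+1)})$ onto $\exp(\h^{(j)})$, so $f^{(j)}$ is equivariant over this group homomorphism and descends to a map of quotients. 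Composing, $f_{\mathcal Z}\colon(\momang[\SK'],\F_{\h'})\to(\momang,\F_\h)$ is a holomorphic foliated surjection. Finally, since $\Sigma'$ is polytopal, $(\momang[\SK'],\F_{\h'})$ is transversely K\"ahler by~\cite[Proposition~4.4]{p-u-v16}, which completes the proof.

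The main obstacle, and the point needing the most care, is the rationality bookkeeping in the first paragraph: Theorem~\ref{hir} produces stellar subdivisions with arbitrary new rays, whereas Construction~\ref{folbu} only implements those whose new ray is an $\mathbb{N}$-linear combination of the generators of the subdivided cone. One must argue that the terminal polytopal fan can be reached through such ``rational'' stellar subdivisions — equivalently, that at each stage the new ray may be perturbed into rational position relative to the cone being subdivided without destroying the combinatorics or, at the last stage, the polytopality — exactly the perturbation argument already used inside the proof of Theorem~\ref{hir}. Once this is set up, everything else is a routine concatenation of the already-established Propositions~\ref{folU} and~\ref{leaves} and the equivariance computation.
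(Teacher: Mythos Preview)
Your overall architecture---apply Theorem~\ref{hir}, realise each stellar subdivision via Construction~\ref{folbu}, compose, and descend to the moment-angle quotients---is exactly the paper's. But the descent step has a genuine gap.

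You never define~$\h'$. Your third paragraph speaks of intermediate subspaces $\h^{(j)}\subset\C^{m_j}$ with $\h^{(j)}\hookrightarrow\h^{(j+1)}$ and argues that each $f^{(j)}$ descends to a map of quotients $U(\SK^{(j+1)})/H^{(j+1)}\to U(\SK^{(j)})/H^{(j)}$. This cannot be made to work stage by stage: a single stellar subdivision raises $m$ by one, so $m_j-n$ alternates parity, and at the odd stages there is \emph{no} half-dimensional complex subspace of $\Ker q^{(j),\C}$ with injective real part---hence no complex moment-angle manifold to descend to. The paper avoids this by working only with the composite $f$ and defining $\h'$ once, at the end: one has $\Ker q'^{\C}=\langle V,\Ker q^{\C}\rangle$, where $V$ is spanned by the new directions contributed by all the subdivisions (cf.~\eqref{kerqt}); after possibly one further subdivision to make $m'-n$ even, one picks any complex half-dimensional $\h_0\subset V$ with injective real projection and sets $\h'=\h_0\oplus\h$. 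The key point for descent is then not equivariance for~$H'$ but \emph{collapsing}: by the explicit form of the maps $f_\tau$, the composite $f$ sends every $\exp(V)$-orbit (in particular every $\exp(\h_0)$-orbit) to a single point, and combined with the evident $H$-equivariance on the last $m$ coordinates this yields~$f_{\mathcal Z}$.

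On the rationality issue you flag: this concern is largely moot. The stellar subdivisions produced in Theorem~\ref{hir} come via Lemma~\ref{stellarsubdivisionofsubdision}, whose proof reduces cone-wise to the lattice setting of de~Concini--Procesi and therefore already yields new rays that are positive integer combinations of the generators of the cone being subdivided---precisely the form Construction~\ref{folbu} requires. The additional perturbation you propose is unnecessary (and would in any case need more care than indicated, since moving a ray mid-sequence disturbs all subsequent subdivisions).
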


\begin{proof}
We apply Theorem~\ref{hir} to construct the marked polytopal fan~$\Sigma'$. We may assume that $(m'-n)$ is even, as otherwise we can apply one more stellar subdivision at a maximal cone of $\Sigma'$.  Hence, we obtain a foliated surjection
\[
 f\colon (U(\SK'), \F_{\Sigma'}) \longrightarrow (U(\SK), \F_{\Sigma})
\]
as in Proposition~\ref{folU}.
Let $q^{\mathbb{C}}\colon \mathbb{C}^m \rightarrow \mathbb{C}^n$ and $q'^{\mathbb{C}}\colon \mathbb{C}^{m'} \rightarrow \mathbb{C}^n$ be the linear maps corresponding to $\Sigma$ and $\Sigma'$ respectively. We may regard $\Ker q^\C$ as a subspace in $\C^{m'}$ via the natural inclusion $\C^m\subset\C^{m'}$. Performing a single stellar subdivision at a simplex $\{j_1,\ldots,j_a\}$ adds to $\Ker q^\C$ an one-dimensional subspace generated by $e_{j_0}-e_{j_1}-\cdots-e_{j_a}$, see~\eqref{kerqt}. We therefore have 
\[
  \Ker q'^{\mathbb{C}}=\langle V,\Ker q_{\mathbb{C}}\rangle,
\]
where $V$ is the subspace generated by the vectors $e_{j_0}-e_{j_1}-\cdots-e_{j_a}$ corresponding to all stellar subdivisions in the sequence.

We pick an arbitrary half-dimensional complex subspace $\h_0 \subset V$ such that its projection onto the real part $\mathbb{R}^{m'}$ is injective. Then the vector space $\h_0 \oplus \h$ provides a complex structure on~$\mathcal{Z}_{\SK'}$. It is clear from the description of generalised toric blow-up that $f$ maps any orbit of $\exp(V)$ on $U(\SK')$ to a single point. It follows that there is a holomorphic foliated map $f_{\mathcal{Z}}$ closing the commutative diagram
\[
\begin{tikzcd}
      (U(\SK'), \F_{\Sigma'}) \arrow{r}[swap]{f}\arrow{d}{\pi_{\Sigma'}} & (U(\SK), \F_{\Sigma})\arrow{d}{\pi_{\Sigma}}  \\
     (\mathcal{Z}_{\SK'}, \F_{\h_0 \oplus \h})\arrow{r}[swap]{f_{\mathcal{Z}}} & (\momang, \F_{\h})
\end{tikzcd}\qedhere
\]
\end{proof}

\subsection{Basic Dolbeault cohomology.}
\begin{thm}\label{mainhodge}
Let $\momang$ be a complex moment-angle manifold corresponding to a complete simplicial fan $\Sigma$, and let $\F_{\h}$ be its canonical foliation. Then  $(\momang,\F_{\h})$ is a Fujiki foliation, so there is a Hodge decomposition
\[
H_{\F_{\h}}^r(\momang;\mathbb{C})=\bigoplus_{p+q=r} H^{p,q}_{\F_{\h}}(\momang).
\]
Furthermore,
\[
    H^{p,q}_{\F_{\h}}(\momang)=0\quad \text{ if } p \neq q.
\]
\end{thm}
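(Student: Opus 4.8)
\medskip

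The plan is to derive Theorem~\ref{mainhodge} formally from the machinery already in place: the foliated toric Chow Lemma (Theorem~\ref{hirmom}), the Hodge decomposition for Fujiki foliations (Theorem~\ref{hodge}), the injectivity in Lemma~\ref{Fujiki}, and the known structure of the basic Dolbeault ring in the polytopal case.

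First I would establish the Fujiki property. Theorem~\ref{hirmom} produces a holomorphic foliated surjection $f_{\mathcal Z}\colon(\mathcal Z_{\SK'},\F_{\h'})\to(\momang,\F_{\h})$ whose source $(\mathcal Z_{\SK'},\F_{\h'})$ is a transversely K\"ahler foliated moment-angle manifold, attached to a polytopal marked fan~$\Sigma'$. Granting that the canonical foliations $\F_{\h}$ (and $\F_{\h'}$) on complex moment-angle manifolds are homologically orientable — which holds for the foliations under consideration and can be read off from the computation of $H^{*}_{\F_{\h}}(\momang)$ in~\cite{ikp2018basic}, or verified directly from tautness of $\F_{\h}$ — the map $f_{\mathcal Z}$ exhibits $(\momang,\F_{\h})$ as a Fujiki foliation in the sense of Section~\ref{fujikifol}. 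The Hodge decomposition $H^r_{\F_{\h}}(\momang;\C)=\bigoplus_{p+q=r}H^{p,q}_{\F_{\h}}(\momang)$ is then exactly Theorem~\ref{hodge}.

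For the vanishing off the diagonal I would argue by comparison with the polytopal case. By Lemma~\ref{Fujiki} applied to $f_{\mathcal Z}$, the induced map
\[
f_{\mathcal Z}^*\colon H^{p,q}_{\F_{\h}}(\momang;\C)\longrightarrow H^{p,q}_{\F_{\h'}}(\mathcal Z_{\SK'};\C)
\]
is injective for every $p$ and $q$. It therefore suffices to show that the right-hand side vanishes whenever $p\neq q$. Since $\Sigma'$ is polytopal, $\F_{\h'}$ is transversely K\"ahler, and by Ishida's theorem~\cite[Theorem~8.1]{ishida2018towards} (see also Lin--Yang~\cite[Theorem~5.2]{li-ya}, and Battaglia--Zaffran~\cite{ba-za15} for generation in degree~$2$) the basic Dolbeault cohomology algebra $H^{*,*}_{\F_{\h'}}(\mathcal Z_{\SK'};\C)$ is generated as a $\C$-algebra by classes of bidegree~$(1,1)$. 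A product of such classes has bidegree~$(k,k)$ and the unit lies in bidegree~$(0,0)$, so $H^{p,q}_{\F_{\h'}}(\mathcal Z_{\SK'};\C)=0$ for $p\neq q$; by the injectivity above the same holds for $H^{p,q}_{\F_{\h}}(\momang)$.

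I expect essentially no further obstacle at this stage: the technical heart of the argument has been front-loaded into the generalised toric blow-up and the foliated toric Chow Lemma (Theorem~\ref{hirmom}), and into the positivity/foliated-integral estimate behind Lemma~\ref{Fujiki}; given these, Theorem~\ref{mainhodge} is a formal consequence together with the transversely K\"ahler base case. The one point that still requires care is the homological orientability of $\F_{\h}$, which is needed both to speak of a Fujiki foliation and to apply Serre duality inside Lemma~\ref{Fujiki}; I would settle this once and for all, either by citing the description of $H^{*}_{\F_{\h}}(\momang)$ as a Poincar\'e duality algebra from~\cite{ikp2018basic} or by checking that the leaves of $\F_{\h}$, being orbits of a subgroup of the compact torus $T^m$, are minimal for a suitable transverse metric.
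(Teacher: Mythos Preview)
Your proposal is correct and follows essentially the same route as the paper: use Theorem~\ref{hirmom} to exhibit $(\momang,\F_{\h})$ as a Fujiki foliation, invoke Theorem~\ref{hodge} for the Hodge decomposition, and then deduce the off-diagonal vanishing from the polytopal case~\cite[Theorem~8.1]{ishida2018towards} via the injectivity of Lemma~\ref{Fujiki}. Your explicit attention to homological orientability is a point the paper leaves implicit, but otherwise the arguments coincide.
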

\begin{proof}
%
First assume that $\Sigma$ is polytopal. Then the canonical foliation $\F_\h$ is 
transversely K\"{a}hler by~\cite[Proposition~4.4]{p-u-v16}, so its basic cohomology admits a Hodge decomposition by~\cite[Theorem~3.4.6]{kacimi1990operateurs}. The fact that non-trivial basic Dolbeault cohomology groups appear only on the diagonal $p=q$ is shown in~\cite[Theorem 8.1]{ishida2018towards} in the transversely K\"ahler case. 

For a general fan $\Sigma$, we consider the map
\[
f_{\mathcal{Z}} \colon (\mathcal{Z}_{\SK'}, \F_{\h'}) \longrightarrow (\momang, \F_{\h})
\]
as in Theorem~\ref{hirmom}. Since the foliation $(\mathcal{Z}_{\SK'}, \F_{\h'})$ is transversely K\"{a}hler, the foliation $(\momang, \F_{\h})$ is Fujiki and both statements of the theorem follow from Lemma~\ref{Fujiki}. 
\end{proof}

Now the basic Dolbeault cohomology ring of $\momang$ can be described completely:

\begin{thm}\label{DolbeaultM}
There is an isomorphism of algebras:
\[
  H^{*,*}_{\F_{\h}}(\momang) \cong \mathbb{C}[v_1,\ldots,v_m] /(I_{\SK}+J),\quad
  v_i \in H^{1,1}_{\F_{\h}}(\momang),
\]
where $I_\SK$ is the Stanley--Reisner ideal of~$\SK$, generated by the monomials 
\[
  v_{i_1}\cdots v_{i_k}\quad\text{with }\{i_1,\ldots,i_k\}\notin\SK,
\]  
and $J$ is the ideal generated by the linear forms
\[
  \sum_{i=1}^m \langle u, a_i \rangle v_i\quad
  \text{with }u\in (\mathfrak t/\r)^*.
\]
\end{thm}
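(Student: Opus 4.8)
The strategy is to combine the Hodge concentration on the diagonal (Theorem~\ref{mainhodge}) with the known description of the full basic cohomology ring $H^*_{\F_\h}(\momang)$ obtained in~\cite{ikp2018basic}. Recall from the introduction that~\cite{ikp2018basic} proves the Battaglia--Zaffran conjecture: the basic cohomology ring $H^*_{\F_\h}(\momang)$ is isomorphic to $\mathbb{C}[v_1,\ldots,v_m]/(I_\SK+J)$ with $\deg v_i = 2$, exactly the ``cohomology of a complete simplicial toric variety'' presentation. So as a graded ring we already have the right answer; what remains is to promote this to a \emph{bigraded} statement, i.e.\ to check that each generator $v_i$ sits in bidegree $(1,1)$ and that the bigraded ring structure is the stated one.

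\textbf{Key steps.} First, I would fix the foliated blow-down $f_\mathcal{Z}\colon(\mathcal Z_{\SK'},\F_{\h'})\to(\momang,\F_\h)$ of Theorem~\ref{hirmom} with $\Sigma'$ polytopal, and use Lemma~\ref{Fujiki} to get an injection $f_\mathcal{Z}^*\colon H^{*,*}_{\F_\h}(\momang)\hookrightarrow H^{*,*}_{\F_{\h'}}(\mathcal Z_{\SK'})$ of bigraded algebras. By Ishida's result~\cite[Theorem~8.1]{ishida2018towards} the target has all ring generators of type $(1,1)$, hence $H^{p,q}_{\F_{\h'}}(\mathcal Z_{\SK'})=0$ for $p\ne q$ and $H^{r,r}_{\F_{\h'}}=H^{2r}_{\F_{\h'}}$; by Theorem~\ref{mainhodge} the same vanishing holds downstairs. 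Therefore the $\partial\bar\partial$-type bigrading on $H^*_{\F_\h}(\momang)$ is forced: $H^{2r}_{\F_\h}(\momang)=H^{r,r}_{\F_\h}(\momang)$ and all odd-degree basic cohomology vanishes. Second, I would identify the degree-two part: $H^2_{\F_\h}(\momang)=H^{1,1}_{\F_\h}(\momang)$ is spanned by classes $v_1,\ldots,v_m$ (images of the classes coming from the $T^m$-invariant divisors / the coordinate hyperplanes, as in the computation of~\cite{ikp2018basic}), and these are naturally of bidegree $(1,1)$ because the corresponding basic $2$-forms are closed $(1,1)$-forms (curvature forms of Hermitian metrics on the relevant line bundles, pulled back to $U(\SK)$ and descended). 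Third, since $H^{*,*}_{\F_\h}(\momang)$ is generated as an algebra by $H^{1,1}$ (this follows from the fact that the underlying graded ring $H^*_{\F_\h}(\momang)=\mathbb{C}[v_i]/(I_\SK+J)$ is generated in degree $2$, together with the collapse $H^{2r}=H^{r,r}$), the bigraded ring is the quotient of $\mathbb{C}[v_1,\ldots,v_m]$, with $v_i$ in bidegree $(1,1)$, by a homogeneous ideal; and that ideal must be $I_\SK+J$ because the underlying graded rings agree by~\cite{ikp2018basic}. This yields the stated isomorphism of bigraded algebras.

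\textbf{Main obstacle.} The genuinely non-formal input is Theorem~\ref{mainhodge} (concentration on the diagonal), which we have already proved via the Fujiki-foliation reduction; granted that, the rest is bookkeeping. The one point requiring a little care is making precise that the abstract ring isomorphism $H^*_{\F_\h}(\momang)\cong\mathbb{C}[v_i]/(I_\SK+J)$ of~\cite{ikp2018basic} can be taken to send the degree-two generators to classes represented by basic $(1,1)$-forms; this is where I would recall the explicit construction of the $v_i$ in~\cite{ikp2018basic} (they arise from $T^m$-invariant, hence basic-compatible, $2$-forms of Hodge type $(1,1)$ on $U(\SK)$ that descend to $\momang$), so that the identification is automatically bigraded. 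Alternatively, one can avoid even this by arguing purely dimension-theoretically: knowing $H^{2r}=H^{r,r}$ for all $r$, any graded isomorphism from $\mathbb{C}[v_i]/(I_\SK+J)$ (with $v_i$ placed in bidegree $(1,1)$) that is the identity in degree $2$ is automatically an isomorphism of bigraded algebras, because both sides are generated in bidegree $(1,1)$ and a surjection of graded algebras that is bijective in each total degree is an isomorphism.
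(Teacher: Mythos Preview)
Your proposal is correct and follows essentially the same approach as the paper: combine the basic de Rham description $H^*_{\F_\h}(\momang)\cong\mathbb{C}[v_i]/(I_\SK+J)$ from~\cite{ikp2018basic} with the diagonal concentration $H^{p,q}_{\F_\h}=0$ for $p\neq q$ of Theorem~\ref{mainhodge}, which forces $H^{2r}_{\F_\h}=H^{r,r}_{\F_\h}$ and hence $v_i\in H^{1,1}$. Your extra work (re-running the blow-down injection and worrying about explicit $(1,1)$-form representatives for the~$v_i$) is unnecessary---once Theorem~\ref{mainhodge} is granted, the bigrading on any degree-$2$ class is automatic---but it does no harm.
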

\begin{proof}
This follows from the description of the basic de Rham cohomology of $\zk$~\cite[Theorem 3.4]{ikp2018basic} and the Hodge decomposition of Theorem~\ref{mainhodge}.
\end{proof}

\section{Manifolds with maximal torus actions.}\label{max}
Here we generalize results of the previous section to the class of \emph{complex manifolds with maximal torus action}, which includes complex moment-angle manifolds as well as LVM- and LMVB-manifolds.

We briefly recall the notation, referrring to~\cite{ishida2013complex}, \cite[Section 4]{ikp2018basic} for the details. Denote by $\Max_1$ the category of complex manifolds with maximal torus action. The objects of $\Max_1$ are given by triples $(M,G,y)$, where

\begin{itemize}
  \item $M$ is a compact connected complex manifold;
  \item $G$ is a compact torus acting on $M$, the $G$-action preserves the complex structure on $M$ and is  \emph{maximal}, i.\,e. there exists a point $x \in M$ such that $\dim G+\dim G_x = \dim M$; 
  \item $y \in M$ satisfies $G_y = \{1\}$. 
\end{itemize}

The morphisms $\Hom_{\Max_1} ((M_1,G_1,y_1), (M_2,G_2,y_2))$ are defined as pairs $(f, \alpha)$, where $\alpha\colon G_1 \rightarrow G_2 $ is a smooth homomorphism and $f \colon M_1 \rightarrow M_2$ is an $\alpha$-equivariant holomorphic map such that $f(y_1)=y_2$.

As in the case of moment-angle manifolds, we may associate with $(M,G,y) \in \Max_1$ a unique pair $(\Sigma', \h)$ such that $M$ is $G$-equivariantly biholomorphic to the quotient manifold $V_{\Sigma'}/H$. Here $\Sigma'$ is a nonsingular rational fan in the Lie algebra $\g$ of~$G$ and $\h \subset \g^\C$ is a complex Lie subalgebra,  $V_{\Sigma'}$ is the toric variety corresponding to $\Sigma'$, and $H$ is the subgroup of the algebraic torus $G^\C$ corresponding to $\h \subset \g^\C$. 
 
There is also a \emph{canonical holomorphic foliation} $\F_\h$ on a manifold $(M, G, y)$ which is defined by the action of the group $R=\exp(i\r)$, where $\r=\mathop{\mathrm{Re}}(\h)$. 
 
The image of the fan $\Sigma'$ under the projection $q\colon \g\rightarrow \g /\r$  is a complete fan. We let $\Sigma=q(\Sigma')$ and denote the generators of $\Sigma$ by $a_1,\ldots,a_m$.

Now we are ready to generalise Theorem~\ref{DolbeaultM} to all complex manifolds with maximal torus action:

\begin{thm}\label{dolbgene}
The basic Dolbeault cohomology ring of the canonical foliation on
a complex manifold with maximal torus action $(M,G,y)$ is given by
\[
  H^{*,*}_{\F_\h}(M) \cong \mathbb{C}[v_1,\ldots,v_m] /(I_{\SK}+J),\quad
  v_i \in H^{1,1}_{\F_\h}(M),
\]
where $I_\SK$ is the Stanley--Reisner ideal of~$\SK$
and $J$ is the ideal generated by the linear forms
\[
  \sum_{i=1}^m \langle u, a_i \rangle v_i\quad
  \text{with }u\in (\g/\r)^*.
\]
\end{thm}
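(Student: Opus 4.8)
The plan is to reduce Theorem~\ref{dolbgene} to Theorem~\ref{DolbeaultM} for complex moment-angle manifolds via the notion of \emph{transverse equivalence}, following the scheme of \cite[\S5]{ikp2018basic}. The key point is that the canonical foliation $(M,\F_\h)$ on a manifold with maximal torus action should be transversely equivalent to the canonical foliation $(\momang,\F_{\h})$ on the moment-angle manifold associated with the same combinatorial-geometric data, namely the complete simplicial marked fan $\Sigma = q(\Sigma')$ in $\g/\r$ with generators $a_1,\dots,a_m$. Since transversely equivalent foliations have isomorphic basic Dolbeault cohomology rings (the basic Dolbeault complex is a transverse invariant), this would immediately transport the computation of Theorem~\ref{DolbeaultM} to $M$.

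First I would recall the quotient presentation $M \cong V_{\Sigma'}/H$, where $V_{\Sigma'}$ is the smooth toric variety of the nonsingular rational fan $\Sigma'$ in $\g$ and $H = \exp(\h) \subset G^\C$. One then identifies the leaf space structure: the canonical foliation $\F_\h$ is generated by the action of $R = \exp(i\r)$ with $\r = \mathrm{Re}(\h)$, and its leaf closures, after passing to the basic fibration, recover the fan $\Sigma = q(\Sigma')$ in $\g/\r$. On the moment-angle side, one has $\momang \cong U(\SK)/H''$ for a suitable complex subgroup determined by a complex structure on $\Ker q^\C$, with the same underlying simplicial complex $\SK$ (the common underlying complex of $\Sigma$) and the same generators $a_1,\dots,a_m$. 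The core step is to exhibit a common ``transverse model'': both foliated manifolds are built from the toric variety $V_\Sigma$ (or the coordinate arrangement complement $U(\SK)$) by quotienting out foliated directions, and the transverse Dolbeault structure on both is governed precisely by $\Sigma$. Concretely, I would construct a chain of holomorphic foliated maps — or a roof diagram — relating $(M,\F_\h)$ and $(\momang,\F_{\h})$ through $(U(\SK),\F_\Sigma)$ (or through $V_{\Sigma'}$), each map being a transverse equivalence because it is a fibre bundle (or submersion) with leaf-wise fibres, so that it induces an isomorphism on basic Dolbeault cohomology. This is where one invokes \cite[\S5]{ikp2018basic}: the machinery there already establishes, for basic de Rham cohomology, that the relevant maps are transverse equivalences; the same maps are holomorphic, so the argument upgrades verbatim to the $\partial$- and $\bar\partial$-graded (Dolbeault) setting once one checks that the bidegree is preserved.

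Having established the transverse equivalence $(M,\F_\h) \sim (\momang, \F_{\h})$, the isomorphism
\[
  H^{*,*}_{\F_\h}(M) \cong H^{*,*}_{\F_{\h}}(\momang)
\]
of bigraded algebras follows, and then Theorem~\ref{DolbeaultM} identifies the right-hand side with $\C[v_1,\dots,v_m]/(I_\SK+J)$ with $v_i \in H^{1,1}$ and $J$ generated by $\sum_i \langle u,a_i\rangle v_i$ for $u \in (\g/\r)^* = (\mathfrak t/\r)^*$. I would also note that this presentation is manifestly independent of the auxiliary choices (the complex structure on $\Ker q^\C$, and the smooth model $\Sigma'$), since it depends only on $\SK$ and the $a_i$, which is consistent with the transverse-equivalence viewpoint.

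The main obstacle I expect is verifying that the maps in the roof diagram are genuinely \emph{holomorphic} foliated transverse equivalences and that the induced isomorphism on basic cohomology respects the Hodge bigrading — i.e. that the transverse equivalence is compatible with the transverse complex structure, not merely the transverse real structure. The basic de Rham statement of \cite{ikp2018basic} does not a priori see the bigrading; one must check that the foliated maps are holomorphic (which they are, being induced by toric morphisms and quotients by complex subgroups), and that a transverse biholomorphism induces a bigraded isomorphism of basic Dolbeault complexes. A secondary technical point is handling the parity adjustment (ensuring the relevant moment-angle manifold is even-dimensional, as in the proof of Theorem~\ref{hirmom}), which does not affect the foliation's transverse structure and hence is harmless.
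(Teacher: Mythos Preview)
Your proposal is correct and matches the paper's own proof essentially verbatim: the paper reduces to Theorem~\ref{DolbeaultM} by invoking the transverse equivalence of $(M,\F_\h)$ and $(\momang,\F_\h)$ established in \cite[\S5]{ikp2018basic}, and then observes that the morphisms in $\Max_1$ realising this equivalence are holomorphic, so the isomorphism on basic cohomology respects the Dolbeault bigrading. Your anticipated ``main obstacle'' is exactly the one point the paper singles out, and it is dispatched in the same way.
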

\begin{proof}
It is proved in \cite[Section 5]{ikp2018basic} that $(M, \F_\h)$ is transversely equivalent via morphisms in $\Max_1$ to the canonical foliation on a moment-angle manifold $\zk$ with the same $(\Sigma, \h)$-data (see \cite[Theorem~5.4, Theorem~5.7, Theorem~5.8]{ikp2018basic}). This transverse equivalence induces an isomorphism in basic Dolbeault cohomology, since morphisms in $\Max_1$ are holomorphic (see~\cite[Proposition~5.1]{ikp2018basic}). Now the result follows from Theorem~\ref{DolbeaultM}.
\end{proof}

\begin{rmk}
Construction~\ref{folbu} can be interpreted as a foliated blow-up of manifolds with maximal torus actions in the following way. Let $M=V_{\Sigma'}/H$ be such a manifold defined by a pair $(\Sigma',\mathfrak h)$, where $\Sigma'$ is a fan with underlying simplicial complex~$\sK$. Choose a cone $\tau$ of $\Sigma'$ and consider the stellar subdivisions 
$\Sigma'_\tau$ and $\sK_\tau$. Let $M_\tau=V_{\Sigma'_{\tau}}/H$ be the manifold defined by the pair $(\Sigma'_\tau,\mathfrak h)$ (with the same~$\mathfrak h$). We have a commutative diagram
\[
\begin{tikzcd}
     U(\SK_\tau) \arrow{r}\arrow{d} & U(\SK)\arrow{d} \\
     V_{\Sigma'_{\tau}}\arrow{r}\arrow{d} & V_{\Sigma'} \arrow{d} \\
     V_{\Sigma'_{\tau}}/H \arrow{r} & V_{\Sigma'}/H
\end{tikzcd}
\]
Here the vertical arrows are the quotient projections, $U(\SK_\tau)\to U(\SK)$ is the foliated surjection from Proposition~\ref{folU}, $V_{\Sigma'_{\tau}}\to V_{\Sigma'}$ is the blow-down map of toric varieties, and $V_{\Sigma'_{\tau}}/H\to V_{\Sigma'}/H$ can be regarded as a foliated blow-down map of manifolds with maximal torus action. Note that $\dim_\C U(\SK_\tau)=m+1$,  $\dim_\C U(\SK)=m$, while $V_{\Sigma'_{\tau}}$ and $V_{\Sigma'}$ (and therefore $V_{\Sigma'_{\tau}}/H$ and $V_{\Sigma'}/H$) have the same dimension.
\end{rmk}

\section{Dolbeault cohomology of moment-angle manifolds}\label{dcmam}
A DGA model for the Dolbeault cohomology algebra of a complex moment-angle manifold can be obtained by combining the description of the basic Dolbeault cohomology (Theorem~\ref{DolbeaultM}) with the recent results by Ishida and Kasuya~\cite{ishida2019transverse}. This model is similar to the one obtained in~\cite[Theorem 5.4]{panov2012complex} in the case of a rational fan, although the differential here is described in less explicit terms.

\begin{thm}\label{dgamodel}
There exists a complex $\frac{m-n}{2}$-dimensional $R$-invariant subspace $W\subset \Omega^ 1(\momang)^{R}$ such that there is a quasi-isomorphism
\begin{equation*}
\bigl(H_{\F_{\h}}^{*,*}(\momang) \otimes \Lambda (W^{1,0} \oplus
W^{0,1}), d_{\momang} \bigr) 
\stackrel\simeq\longrightarrow 
\bigl( \Omega^{*, *}(\momang), \overline{\partial} \bigr),
\end{equation*}
where $W \otimes \mathbb{C} \cong W^{1,0} \oplus
W^{0,1}$ and $d_{\momang}(W^{0,1})=0$, $d_{\momang}(W^{1,0}) \in H_{\F_{\h}}^{1,1}(\momang)$.
\end{thm}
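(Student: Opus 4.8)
The plan is to combine the basic Dolbeault description (Theorem~\ref{DolbeaultM}) with an Ishida--Kasuya type model for the full Dolbeault cohomology of a complex manifold whose canonical foliation has a nice basic cohomology. First I would recall the structure of the foliation $\F_{\h}$: its leaves are the orbits of the compact group $R=\exp(i\r)\subset T^m$, so $\momang$ fibres (in the transverse/orbifold sense) over the leaf space, and basic forms are exactly the $R$-invariant forms killed by contraction with the vector fields generating $\r$. The leaf direction is spanned by the real Lie algebra $\r=\mathop{\mathrm{Re}}(\h)$, which sits inside the $(m-n)$-real-dimensional space; complexifying, one has the transverse complex structure with $\r^\C=\Ker q^\C$. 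The model will be built on the bigraded algebra $H_{\F_{\h}}^{*,*}(\momang)$ (the transverse/basic part) tensored with an exterior algebra $\Lambda(W^{1,0}\oplus W^{0,1})$ accounting for the ``leafwise'' or ``fibre'' directions, and the total differential $d_{\momang}$ is $\overline\partial$ on the first factor (which vanishes by Theorem~\ref{mainhodge}, everything being of type $(p,p)$) plus a transgression-type term sending $W^{1,0}$ into $H^{1,1}_{\F_{\h}}$ and killing $W^{0,1}$.

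The key steps, in order, are: (1) identify $W$. The leaves are compact complex tori $R^\C/H \cong R$ (with $\r^\C = \Ker q^\C$ carrying the complex structure induced by $\h$), of complex dimension $\frac{m-n}{2}$; I would take $W\subset\Omega^1(\momang)^R$ to be the $R$-invariant $1$-forms pulled back (or naturally associated) to a basis of the dual of $\r$ --- concretely, the images under the quotient $U(\SK)\to\momang$ of the flat invariant $1$-forms $dw$ along the $R^\C$-directions, which are globally defined on $\momang$ because $R$ acts by translations on its orbits. This $W$ is $\frac{m-n}{2}$-complex-dimensional and $R$-invariant, and $W\otimes\C$ splits as $W^{1,0}\oplus W^{0,1}$ using the transverse-plus-leafwise complex structure. (2) Invoke the Ishida--Kasuya machinery \cite{ishida2019transverse}: for a complex manifold with this kind of equivariantly flat fibration over the leaf space with complex-torus fibres, the Dolbeault complex $(\Omega^{*,*}(\momang),\overline\partial)$ is quasi-isomorphic to the $\overline\partial$-complex of $R$-invariant forms, and the latter is quasi-isomorphic to (basic forms)$\otimes\Lambda(W^{1,0}\oplus W^{0,1})$ with a Koszul/Hattori-type differential. (3) Pass from basic forms to their cohomology: because $(\momang,\F_{\h})$ is transversely formal in the relevant sense --- its basic Dolbeault cohomology is a polynomial-type algebra concentrated on the diagonal and generated in degree $(1,1)$, by Theorems~\ref{mainhodge} and \ref{DolbeaultM} --- one can replace $\Omega^{*,*}_{\F_{\h}}(\momang)$ by $H^{*,*}_{\F_{\h}}(\momang)$ inside the model, obtaining the stated quasi-isomorphism. (4) Determine the differential: $\overline\partial$ of a basic class is zero; $\overline\partial$ of a $(0,1)$-leafwise generator is zero because these generators are $\overline\partial$-closed $R$-invariant forms on the torus fibres pulled to $\momang$; and $\overline\partial$ of a $(1,0)$-leafwise generator, by the transgression in the fibration, lands in $H^{1,1}_{\F_{\h}}(\momang)$ --- this is the analogue of the fact, in \cite[Theorem 5.4]{panov2012complex}, that $d(w_j)$ is a linear combination of the degree-two basic classes, only now we cannot write the coefficients down explicitly because we lack an explicit transversely Kähler form.

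The main obstacle I expect is step (3)/(4): justifying that one may replace the basic Dolbeault \emph{complex} by its \emph{cohomology} inside the model, i.e. establishing the needed transverse-formality and controlling the induced differential on $W^{1,0}$. In the rational case of \cite{panov2012complex} this is handled by an explicit Koszul resolution coming from the algebraic-torus action and by explicit invariant forms; here the foliation is generally not a fibre bundle over a variety, so the argument must be phrased entirely in terms of the basic Dolbeault cohomology ring and the Ishida--Kasuya model, and one has to check that the transgression map $W^{1,0}\to H^{1,1}_{\F_{\h}}(\momang)$ is well-defined on cohomology and that no higher-order corrections to $d_{\momang}$ survive --- which ultimately rests on the fact that $H^{*,*}_{\F_{\h}}(\momang)$ is generated in bidegree $(1,1)$ and has trivial off-diagonal part, forcing any potential correction term to vanish for bidegree reasons. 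Once that is in place, bilinearity and the Leibniz rule extend $d_{\momang}$ uniquely to the whole tensor product, and comparing with $\overline\partial$ on a generating set of forms finishes the proof.
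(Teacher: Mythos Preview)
Your overall architecture is the paper's: construct $W$ from leafwise-dual invariant $1$-forms, feed everything into the Ishida--Kasuya model from~\cite{ishida2019transverse}, and then replace the basic Dolbeault complex by its cohomology. The construction of $W$ in the paper is exactly what you sketch, made precise: one takes an $\h$-valued $T^m$-invariant $1$-form $\omega$ with $\iota_{X_v}\omega=v$ for $v\in\h$ (this is \cite[Lemma~4.4, Proposition~4.9]{ishida2019transverse}), expands it in a real basis $u_1,\ldots,u_{\frac{m-n}2},J_\h u_1,\ldots,J_\h u_{\frac{m-n}2}$ of $\h$, and lets $W$ be the $J$-invariant span of the component $1$-forms.

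The genuine gap is your step~(3). You propose to pass from $\Omega^{*,*}_{\F_\h}(\momang)$ to $H^{*,*}_{\F_\h}(\momang)$ inside the model by a ``transverse formality'' argument based on diagonal concentration and generation in bidegree $(1,1)$. That information about the \emph{cohomology} does not by itself give formality of the basic Dolbeault \emph{complex}: knowing $H^{p,q}_{\F_\h}=0$ for $p\neq q$ controls where the answer lives, but it does not rule out nontrivial higher operations or obstructions to a quasi-isomorphism $\Omega^{*,*}_{\F_\h}\simeq H^{*,*}_{\F_\h}$. Your bidegree argument at the end constrains the induced differential on $W$, but it does not touch this formality issue. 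The paper closes this gap differently and more cheaply: since $(\momang,\F_\h)$ is a Fujiki foliation (Theorem~\ref{mainhodge}), the basic $\partial_{\F_\h}\overline\partial_{\F_\h}$-lemma holds via the injection of Lemma~\ref{Fujiki} into a transversely K\"ahler foliation, and this is precisely the hypothesis under which \cite[Corollary~4.10, Proposition~4.11]{ishida2019transverse} deliver the stated quasi-isomorphism directly, with the differential already satisfying $d_{\momang}(W^{0,1})=0$ and $d_{\momang}(W^{1,0})\subset H^{1,1}_{\F_\h}(\momang)$. So the missing idea is: invoke the Fujiki property to get the basic $\partial\overline\partial$-lemma, rather than trying to argue formality from the shape of the cohomology ring.
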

\begin{proof}
By  \cite[Lemma 4.4, Proposition 4.9]{ishida2019transverse}, there exists an
$\h$-valued 1-form $\omega$ on $\momang$ satisfying
\begin{itemize}
  \item[(a)] $\iota_{X_v}\omega=v$ for all $v \in \h$;
  \item[(b)] $\omega$ is $T^m$-invariant.
\end{itemize}
The choice of $\omega$ is equivalent to the choice of a $T^m$-invariant holomorphic distribution $\mathcal{W}$ on $\momang$ which is complementary to $\F_{\h}$ at each point.

Let $J$ be the complex structure on $\momang$, and $J_{\h}$ the complex structure on~$\h$. Choose a real basis $u_1,\ldots,u_{\frac{m-n}{2}}, J_{\h}u_1,\ldots,J_{\h}u_{\frac{m-n}{2}}$ of $\h$ and write
\[
  \omega=\omega_1 \otimes u_1+\cdots+\omega_{\frac{m-n}{2}} \otimes u_{\frac{m-n}{2}}+\omega_1'\otimes J_{\h}u_1+\cdots+\omega_{\frac{m-n}{2}}' \otimes J_{\h}u_{\frac{m-n}{2}}.
\]
Then define $W$ as the $J$-invariant subspace $\langle \omega_1,\ldots,\omega_{\frac{m-n}{2}}, \omega_1',\ldots\omega_{\frac{m-n}{2}}' \rangle$.

Since $\F_{\h}$ is a Fujiki foliation, the $\partial_{\F_{\h}}\overline{\partial}_{\F_{\h}}$-lemma holds for the complex $\bigl(\Omega^{*,*}_{\F_{\h}}(\momang), \overline{\partial}_{\F_{\h}}\bigr)$, see Lemma~\ref{Fujiki}. Then the required quasi-isomorphism follows from \cite[Corollary 4.10, Proposition 4.11]{ishida2019transverse}.
\end{proof}

\end{document}